\newtheorem{theorem}{Theorem}[section]
\newtheorem{corollary}[theorem]{Corollary}
\newtheorem{lemma}[theorem]{Lemma}
\newtheorem{remark}[theorem]{Remark}
\newtheorem{definition}[theorem]{Definition} 
\newcommand\mult{\operatorname{\textup{{\fontfamily{ptm}\selectfont mult}}}}
\newcommand\dg{\operatorname{\textup{{\fontfamily{ptm}\selectfont deg}}}}
\newcommand\brho{\operatorname{\boldsymbol{\rho}}}
      \def\@setcopyright{}
      \def\serieslogo@{}
\begin{document}
   \author{Amin  Bahmanian}
   \address{Department of Mathematics,
  Illinois State University, Normal, IL USA 61790-4520}

\title[Ryser's Theorem for  $\brho$-latin Rectangles]{Ryser's Theorem for  $\brho$-latin Rectangles}

   \begin{abstract}  
Let $L$ be an $n\times n$ array whose  top left $r\times s$ subarray is filled with $k$ different symbols, each occurring at most once in each row and at most once in each column. We find necessary and sufficient conditions that ensure  the remaining cells of $L$ can be filled  such  that each symbol occurs at most once in each row and at most once in each column, and 
each symbol occurs a prescribed number of times in $L$. The case where the prescribed number of times each symbol occurs is $n$ was solved by Ryser  (Proc. Amer. Math. Soc. 2 (1951), 550--552), and the case $s=n$ was settled by Goldwasser et al. (J. Combin. Theory Ser. A 130 (2015), 26--41). Our technique leads to a very  short proof of the latter.
   
   \end{abstract}

   \subjclass[2010]{05B15, 05C70, 05C65, 05C15}
   \keywords{Latin Square, Embedding, $(g,f)$-factor, Ore's Theorem, Ryser's Theorem, Amalgamation, Detachment}

   \date{\today}

   \maketitle   
\section{Introduction} 
Let $n,k\in \mathbb{N}$ and let $\brho=(\rho_1,\dots,\rho_k)$ with $1\leq \rho_\ell\leq n\leq k$ for $\ell\in [k]:=\{1,2,\dots, k\}$ such that $\sum_{\ell\in [k]} \rho_\ell=n^2$. A {\it $\brho$-latin square} $L$ of order $n$ is an $n\times n$ array filled with $k$ different {\it symbols}, $[k]$, each occurring at most once in each row and at most once in each column, such that each symbol $\ell$ occurs exactly $\rho_\ell$ times in $L$ for $\ell\in [k]$. An $r \times s$ {\it  $\brho$-latin rectangle} on the set $[k]$ of symbols is an $r\times s$ array in which each symbol in $[k]$ occurs at most once in each row and in each column, and in which   each symbol $\ell$ occurs at most $\rho_\ell$ times  for $\ell\in [k]$. A {\it latin square} (or {\it rectangle}) is a $\brho$-latin square (or {\it rectangle}) with $\brho=(n,\dots,n)$. 

We are interested in conditions that ensure that an $r \times s$ $\brho$-latin rectangle can be extended to a $\brho$-latin square.  The cases   $(\brho,s)=((n,\dots,n),n)$, and $\brho=(n,\dots,n)$ were settled by Hall \cite{MR13111}, and  Ryser \cite{MR42361}, respectively. Most recently, the case $s=n$ was solved by Goldwasser,  Hilton,  Hoffman, and \"{O}zkan  \cite{MR3280683} (See Theorem \ref{rhohallthmgoldetal}).  For an old yet excellent survey on embedding latin squares, we refer the reader to  Lindner \cite{MR1096296}. 

We need a few pieces of notation before we can state our main result. The complement of a set $S$ is denoted by $\bar S$,  $x \dotdiv y:=\max\{0, x-y\}$, and we write $x-y\dotdiv z$ for $(x-y)\dotdiv z$. If $L$ is an $r\times s$  $\brho$-latin rectangle, $e_\ell$ will denote the number of occurrences of symbol $\ell$ in $L$. We will always assume that $n>\min \{r,s\}$, and that $[r], [s]$, and $[k]$ are the set of rows, the set of columns, and the set of symbols of $L$. 
Let $i$ be a row in $[r]$,  $j$ be a column in $[s]$,  $\ell$ be a symbol in $[k]$, and let $I\subseteq [r], J\subseteq [s], K\subseteq [k]$. Then $\mu_K(i), \mu_K(j), \mu_I(\ell)$, and $\mu_J(\ell)$ are the number of symbols in $K$ that are missing in row $i$, the number of symbols in $K$ that are missing in column $j$, the number of rows in $I$ where symbol $\ell$ is missing, and the number of columns in $J$ where symbol $\ell$ is missing, respectively, and 
\begin{align*}
    \mu_K(I)&:=\sum_{i\in I}\mu_K(i), & \mu_I(K)&:=\sum_{\ell\in K} \mu_I(\ell),\\
    \mu_K(J)&:=\sum_{j\in J}\mu_K(j), & \mu_J(K)&:=\sum_{\ell\in K} \mu_J(\ell).
\end{align*}
Observe that 
\begin{align*}
    &\mu_K(I)= \mu_I(K), &\mu_{I}(\ell)+\mu_{\bar I}(\ell)=r-e_\ell,\\
    &\mu_K(J)= \mu_J(K), &\mu_{J}(\ell)+\mu_{\bar J}(\ell)=s-e_\ell.
\end{align*}

Using our notation, the main  result of  \cite{MR3280683} can be stated  as follows. 
\begin{theorem} \cite{MR3280683} \label{rhohallthmgoldetal}
An $r\times n$  $\brho$-latin rectangle $L$ can be completed to an $n\times n$ $\brho$-latin square  if and only if $\rho_\ell - e_\ell\leq n-r$ for $\ell\in [k]$, and either one of the following conditions hold.
\begin{align}
    |J|(n-r) &\leq \sum_{\ell\in [k]} \min \Big\{\rho_\ell-e_\ell, \mu_J(\ell)\Big\}  &\forall J\subseteq  [n],\label{orenec}\\
    |J|(n-r) &\geq \sum_{\ell\in [k]} \Big( \rho_\ell-e_\ell\dotdiv \mu_{\bar J}(\ell)\Big)  &\forall J\subseteq [n]. \label{hoffmanc}
\end{align}
\end{theorem}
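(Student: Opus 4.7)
The plan is to reduce sufficiency to a bipartite $f$-factor problem handled by Ore's theorem and then extract the $n-r$ missing rows via K\"onig's edge-coloring theorem; necessity will be a counting argument, and the two conditions are complement-reformulations of one another. I would first verify that \eqref{orenec} at $J$ is equivalent to \eqref{hoffmanc} at $\bar J$: using the identity $\min\{a,b\}=a-(a\dotdiv b)$ together with $\sum_{\ell\in[k]}(\rho_\ell-e_\ell)=n^2-rn$, the right side of \eqref{orenec} equals $n(n-r)-\sum_\ell(\rho_\ell-e_\ell\dotdiv\mu_J(\ell))$, and rearranging yields exactly \eqref{hoffmanc} at $\bar J$, so it suffices to work with \eqref{orenec}.

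For necessity the bound $\rho_\ell-e_\ell\le n-r$ is immediate because $\ell$ can appear at most once in each of the $n-r$ still-empty rows. For \eqref{orenec} I would consider, in any completion, the $(n-r)\times|J|$ block of cells in rows $r+1,\dots,n$ over columns $J$: each of its $|J|(n-r)$ cells carries a symbol missing from its column, so a given $\ell$ contributes at most $\mu_J(\ell)$ of them (one per missing column in $J$) and at most $\rho_\ell-e_\ell$ of them (remaining quota), and summing over $\ell$ produces \eqref{orenec}.

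For sufficiency I would form the bipartite graph $G$ on $[n]\cup[k]$ with edge $j\ell$ iff symbol $\ell$ is missing from column $j$, and set $f(j)=n-r$, $f(\ell)=\rho_\ell-e_\ell$; the sums balance since $\sum_j f(j)=n(n-r)=\sum_\ell f(\ell)$. Ore's theorem on bipartite $f$-factors asserts that $G$ carries such a factor $G^*$ iff $|J|(n-r)\le\sum_{\ell\in[k]}\min\{\rho_\ell-e_\ell,|N_G(\ell)\cap J|\}$ for every $J\subseteq[n]$, and since $|N_G(\ell)\cap J|=\mu_J(\ell)$ this is precisely \eqref{orenec}. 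Because $\rho_\ell-e_\ell\le n-r$, the simple bipartite graph $G^*$ has maximum degree $n-r$, so K\"onig's theorem supplies a proper $(n-r)$-edge-coloring; every column, having degree exactly $n-r$ in $G^*$, meets one edge of each color, and setting $L[r+i,j]=\ell$ whenever $j\ell$ receives color $i$ completes the rectangle: the matching property of each color class enforces the row condition, the construction of $G$ enforces the column condition, and $\deg_{G^*}(\ell)=\rho_\ell-e_\ell$ delivers the correct symbol totals. The one genuinely substantive step will be recognizing \eqref{orenec} as the precise Ore hypothesis on $G$; once this identification is in hand the argument collapses to invocations of Ore and K\"onig, matching the promise of a very short proof.
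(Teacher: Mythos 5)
Your proof is correct, and it reaches the same Ore-theoretic heart as the paper — the auxiliary bipartite graph $\Gamma$ on $[n]\cup[k]$ with edge $j\ell$ iff symbol $\ell$ is missing from column $j$, and the $f$-factor with $f(j)=n-r$, $f(\ell)=\rho_\ell-e_\ell$ — but it reassembles the completion from the $f$-factor by a genuinely different route. The paper first encodes $L$ as a partial edge-coloring of $K_{n,n}$, amalgamates rows $r+1,\dots,n$ into a single vertex $\alpha$, translates the completion problem into a coloring problem on the amalgamated graph, and only then invokes the detachment lemma (Lemma~\ref{amalgambahrod}) to split $\alpha$ back into $n-r$ rows. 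You instead take the $f$-factor $G^*\subseteq\Gamma$ directly, observe that $\Delta(G^*)=n-r$ because every column has degree exactly $n-r$ and $\deg_{G^*}(\ell)=\rho_\ell-e_\ell\le n-r$, and apply K\"onig's edge-coloring theorem to peel $G^*$ into $n-r$ matchings, each of which fills one of the missing rows; the properness of the coloring gives the row condition, simplicity of $G^*$ gives the column condition, and the degrees $\rho_\ell-e_\ell$ give the symbol counts. What each buys: K\"onig is the lighter, more self-contained tool, and it suffices here because only one side (rows) is being extended; the paper's detachment lemma is heavier but is exactly what is needed in Section~\ref{rhorythmprfsec} for the full Ryser version, where both $\alpha$ and a column-amalgam $\beta$ must be split simultaneously while controlling $\mult(\alpha_i\beta_j)$, something a single K\"onig coloring does not directly deliver. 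You also supply the necessity of \eqref{orenec} by an explicit count over the $(n-r)\times|J|$ block and the equivalence of \eqref{orenec} and \eqref{hoffmanc} by the $\min\{a,b\}=a-(a\dotdiv b)$ identity; the paper obtains necessity implicitly by reversing its chain of equivalences and records the \eqref{orenec}/\eqref{hoffmanc} equivalence in Remark~\ref{ordenewcond}, so the content is the same but your presentation is more direct.
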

Rewriting \eqref{orenec} for $\bar J$ will result in \eqref{hoffmanc}, so conditions \eqref{orenec} and \eqref{hoffmanc} are essentially the same (See Remark \ref{ordenewcond}). In \cite{MR3280683}, \eqref{orenec} is being referred to as Hall's refined condition, which  is a special case of the more general Hall's constrained condition. If you think of a set $J$ of columns of $L$ as an $r\times |J|$ subrectangle of $L$, Hall's constrained condition checks an  inequality similar to that of Hall's refined condition for all subsets of cells, rather than those subsets that form an $r\times |J|$ rectangle.

We provide a short proof of   Theorem \ref{rhohallthmgoldetal} (See Theorem \ref{rhohallthm}). We also generalize Theorem \ref{rhohallthmgoldetal} further by establishing an analogue of Ryser's Theorem for $\brho$-latin rectangles. 

Suppose that an $r\times s$  $\brho$-latin rectangle $L$ is extended to an $n\times n$ $\brho$-latin square, and let us fix a symbol $\ell\in [k]$. On the one hand, there are $\rho_\ell-e_\ell$ occurrences of symbol $\ell$ outside the top left $r\times s$ subsquare. On the other hand, there are at most $n-r$ occurrences of symbol $\ell$ in the last $n-r$ rows and at most $n-s$ occurrences of symbol $\ell$ in the last $n-s$ columns. Therefore, we must have
\begin{align} \label{mainryserineq}
e_\ell \geq r+s+\rho_\ell - 2n\quad \forall \ell\in [k].
\end{align}
It turns out that condition \eqref{mainryserineq} is a special case of a more general necessary condition. Let 
\begin{align*}
    P_t=\{\ell \in [k] \ |\ \rho_\ell-e_\ell >  n-t  \}\quad \mbox{for } t\in \{r,s\}.
\end{align*}
Note that $P_{\min \{r,s\}}\subseteq P_{\max \{r,s\}}$.

\begin{definition}  \label{fitdef} A sequence $\{(a_\ell,b_\ell)\}_{\ell=1}^k$ is said to be {\it fitting} if $a_\ell,b_\ell \in\mathbb{N}\cup \{0\}$ for  $\ell\in [k]$, and the following conditions are satisfied.
\begin{align*}
\begin{cases}
       \displaystyle\sum_{\ell\in [k]} a_\ell=r(n-s)+|P_r|(n-r)-\sum_{\ell\in P_r}(\rho_\ell-e_\ell), \\[20pt]
        \displaystyle\sum_{\ell\in [k]} b_\ell=s(n-r)+|P_s|(n-s)-\sum_{\ell\in P_s}(\rho_\ell-e_\ell),\\[20pt]
         a_\ell+b_\ell\leq 
        \begin{cases}
            2n- r-s+ e_\ell-\rho_\ell& \mbox {if  } \ell \in P_{\min \{r,s\}},\\ 
            n- \max \{r,s\} & \mbox {if  } \ell \in P_{\max \{r,s\}}\backslash P_{\min \{r,s\}},\\
            \rho_\ell -e_\ell & \mbox {if  } \ell \in \bar P_{\max \{r,s\}}.
        \end{cases}
    \end{cases}
\end{align*} 
\end{definition}
Here is our main result. 
\begin{theorem} \label{rhoryserthmfullversion}
An $r\times s$  $\brho$-latin rectangle $L$ can be completed to an $n\times n$ $\brho$-latin square  if and only if
 there exists a fitting sequence $\{(a_\ell,b_\ell)\}_{\ell=1}^k$ 
such that any of the  following conditions
\begin{align*}
    |I|(n-s) &\leq \sum_{\ell\in P_r} \min \Big\{a_\ell+\rho_\ell-e_\ell-n+r, \mu_I(\ell)\Big\}+ \sum_{\ell\in \bar P_r} \min \Big\{a_\ell, \mu_I(\ell)\Big\}  &\forall I\subseteq  [r],\\
    \sum_{\ell\in (K\cap P_r)} (\rho_\ell-e_\ell) &\leq \sum_{i\in [r]} \min \Big\{n-s, \mu_K(i)\Big\}- \sum_{\ell\in K} a_\ell +|K\cap P_r|(n-r) &\forall K\subseteq  [k],\\
  |I|(n-s) &\geq \sum_{\ell\in P_r} \Big( a_\ell+\rho_\ell+\mu_I(\ell)\dotdiv n\Big)+\sum_{\ell\in \bar P_r} \Big( a_\ell\dotdiv \mu_{\bar I}(\ell)\Big)  &\forall I\subseteq [r],\\    
     \sum_{\ell\in (K\cap P_r)}(\rho_\ell-e_\ell)    &\geq \sum_{i\in [r]} \Big (n-s\dotdiv \mu_{\bar K}(i)\Big) + |K\cap P_r|(n-r) - \sum_{\ell\in K} a_\ell &\forall K\subseteq [k],\\
    |I|(n-s) &\leq \sum_{\ell\in (K\cap P_r)} (\rho_\ell-e_\ell) +\sum_{\ell\in K} a_\ell+ \mu_I(\bar K) -|K\cap P_r|(n-r)  &\forall I\subseteq [r], K\subseteq [k], \\
    \sum_{\ell\in (K\cap P_r)} (\rho_\ell-e_\ell) &\leq |I|(n-s)+|K\cap P_r|(n-r) + \mu_K(\bar I)- \sum_{\ell\in K} a_\ell   &\forall I\subseteq  [r], K\subseteq [k],
\end{align*}
together with any of the following conditions hold.
\begin{align*}
    |J|(n-r) &\leq \sum_{\ell\in P_s} \min \Big\{b_\ell+\rho_\ell-e_\ell-n+s, \mu_J(\ell)\Big\} + \sum_{\ell\in \bar P_s} \min \Big\{b_\ell, \mu_J(\ell)\Big\}  &\forall J\subseteq  [s],\\
    \sum_{\ell\in (K\cap P_s)} (\rho_\ell-e_\ell) &\leq \sum_{j\in [s]} \min \Big\{n-r, \mu_K(j)\Big\}- \sum_{\ell\in K} b_\ell +|K\cap P_s|(n-s)  &\forall K\subseteq  [k],\\
  |J|(n-r) &\geq \sum_{\ell\in [k]} \Big( b_\ell+\rho_\ell+\mu_J(\ell)\dotdiv n\Big) +\sum_{\ell\in \bar P_s} \Big( b_\ell\dotdiv \mu_{\bar J}(\ell)\Big)  &\forall J\subseteq [s],\\    
     \sum_{\ell\in (K\cap P_s)}(\rho_\ell-e_\ell)    &\geq \sum_{j\in [s]} \Big (n-r\dotdiv \mu_{\bar K}(j)\Big) + |K\cap P_s|(n-s) - \sum_{\ell\in K} b_\ell &\forall K\subseteq [k],\\
    |J|(n-r) &\leq \sum_{\ell\in (K\cap P_s)} (\rho_\ell-e_\ell) + \sum_{\ell\in K} b_\ell+ \mu_J(\bar K)-|K\cap P_s|(n-s)  &\forall J\subseteq [s], K\subseteq [k], \\
    \sum_{\ell\in (K\cap P_s)} (\rho_\ell-e_\ell) &\leq |J|(n-r)+|K\cap P_s|(n-s) + \mu_K(\bar J) - \sum_{\ell\in K} b_\ell &\forall J\subseteq  [s], K\subseteq [k].
\end{align*}
\end{theorem}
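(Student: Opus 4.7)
Necessity is a direct count. Suppose $L$ embeds in an $n\times n$ $\brho$-latin square $M$, and let $\alpha_\ell,\beta_\ell,\gamma_\ell$ denote the number of occurrences of symbol $\ell$ in the top-right $r\times(n-s)$, bottom-left $(n-r)\times s$, and bottom-right $(n-r)\times(n-s)$ blocks of $M$. Then $\alpha_\ell+\beta_\ell+\gamma_\ell=\rho_\ell-e_\ell$, and pigeonholing at the bottom $n-r$ rows forces $\alpha_\ell \geq \rho_\ell - e_\ell - (n-r)$ when $\ell \in P_r$; symmetrically $\beta_\ell \geq \rho_\ell - e_\ell - (n-s)$ when $\ell \in P_s$. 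Setting $a_\ell := \alpha_\ell - \max\{0,\rho_\ell-e_\ell-n+r\}$ and $b_\ell := \beta_\ell - \max\{0,\rho_\ell-e_\ell-n+s\}$, a short arithmetic check confirms that $\{(a_\ell,b_\ell)\}$ is fitting, while each stated Hall-type inequality comes from counting cells of the sub-rectangles $I\times\{s{+}1,\dots,n\}$ (resp.\ $\{r{+}1,\dots,n\}\times J$) against the symbols that are available for placement there.

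For sufficiency we follow the amalgamation--detachment approach signalled by the keywords. View an $n\times n$ $\brho$-latin square as a proper edge coloring of $K_{n,n}$ in which color $\ell$ is used exactly $\rho_\ell$ times, so $L$ is a partial proper coloring on the induced subgraph $K_{[r],[s]}$. Amalgamate the rows $\{r+1,\dots,n\}$ to a single super-vertex $\omega_R$ and the columns $\{s+1,\dots,n\}$ to $\omega_C$, obtaining a bipartite multigraph $H$ on parts $[r]\cup\{\omega_R\}$ and $[s]\cup\{\omega_C\}$. Completing $L$ is now (i) extending the coloring on $L$ to a proper coloring of $H$ with prescribed color-degrees at $\omega_R$ and $\omega_C$, followed by (ii) detaching $\omega_R$ into $n-r$ vertices and $\omega_C$ into $n-s$ vertices while preserving properness. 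Step (ii) is a standard application of an edge-colored detachment theorem once the degrees at the super-vertices are appropriately balanced. The fitting sequence $\{(a_\ell,b_\ell)\}$ prescribes exactly those degrees: the number of edges of color $\ell$ joining $[r]$ to $\omega_C$ equals $a_\ell$ plus the forced excess $\max\{0,\rho_\ell-e_\ell-n+r\}$, and symmetrically for edges joining $\omega_R$ to $[s]$; the bound on $a_\ell+b_\ell$ in Definition~\ref{fitdef} then forces the residual color-multiplicity at the $(\omega_R,\omega_C)$ bundle to be nonnegative and compatible with $\rho_\ell-e_\ell$.

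The two families of Hall-type inequalities are precisely the Ore-type and Hoffman-type conditions (and their hybrids) characterizing solvability of step (i): the first family guarantees a proper color-extension on the bundle between $[r]$ and $\omega_C$ (consistent with the rows of $L$), and the second does the same for the bundle between $\omega_R$ and $[s]$. As already observed in the paper for \eqref{orenec} and \eqref{hoffmanc}, the six forms within each family are equivalent by complementation in $[r]$, $[s]$, or $[k]$, so we pick whichever is most convenient. The main technical obstacle is step (i): we would recast the required color-assignment as a $(g,f)$-factor problem in an auxiliary bipartite graph whose vertices encode symbols and row/column-positions, and invoke an Ore-type factor theorem whose hypotheses translate into exactly the conjunction of one inequality from each family. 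Specializing to $s=n$ collapses $\omega_C$ entirely, reducing the whole setup to one bundle and yielding Theorem~\ref{rhohallthmgoldetal} with essentially no extra work, which is the advertised short proof.
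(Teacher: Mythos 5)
Your proposal matches the paper's argument in both structure and substance: you amalgamate the missing rows and columns into two super-vertices, characterize the coloring of the reduced graph via an auxiliary bipartite ``missing-symbol'' graph, split that factor problem into two independent ones parametrized by the fitting sequence (which accounts for how $\rho_\ell-e_\ell$ is distributed across the three uncompleted blocks), invoke Ore's theorem to get the two families of conditions, and recover the rectangle via the detachment lemma. This is exactly the route taken in Section~\ref{rhorythmprfsec}, so no genuine gap to report.
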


For a much simpler generalization of Ryser's Theorem, see Corollary \ref{cor2}.
\section{Tools}
For $x,y\in \mathbb{R}$, $x\approx y$ means $\lfloor y \rfloor \leq x\leq \lceil y \rceil$. For a real-valued function $f$ on a domain $D$ and  $S\subseteq D$,  $f(S) :=\sum_{x\in S}f(x)$.

All graphs under consideration are loopless, but they may have parallel edges.  For a graph $G=(V,E)$, $u,v\in V$ and $S,T\subseteq V$ with $S\cap T$, $\dg_G(u)$, $\mult_G(uv)$,  $\mult_G(uS)$, and $\mult_G(ST)$ denote the number of edges incident with $u$,  the number of edges whose endpoints are $u$ and $v$,  the number of edges between $u$ and $S$, and the number of edges between $S$ and $T$, respectively. If the edges of $G$ are colored with $k$ colors (the set of colors is always $[k]$), then $G(\ell)$ is the color class $\ell$ of $G$ for $\ell\in [k]$. A bigraph $G$ with bipartition $\{X, Y \}$ will be denoted by $G[X, Y ]$, and for $S\subseteq X$, $\bar S:=X\backslash S$.

Let $G$ be a graph whose edges are colored, and let $\alpha\in V(G)$. By {\it splitting} $\alpha$ into $\alpha_1,\dots,\alpha_p$, we obtain a new graph $F$ whose vertex set is $\left(V(G)\backslash \{\alpha\}\right) \cup \{\alpha_1,\dots, \alpha_p\}$ so that each edge $\alpha u$ in $G$ becomes $\alpha_i u$ for some $i\in [p]$ in $F$. Intuitively speaking, when we {\it split}  a vertex $\alpha$ into $\alpha_1,\dots,\alpha_p$, we share the edges incident with $\alpha$ among $\alpha_1\dots,\alpha_p$.   In this manner, $F$ is a {\it detachment} of $G$, and $G$ is an {\it amalgamation} of $F$ obtained by {\it identifying} $\alpha_1,\dots, \alpha_p$ by $\alpha$. The following detachment lemma will be crucial in the proof of our main result. 
\begin{lemma} 
\label{amalgambahrod} \cite{MR2946077}
  Let $G$ be a graph whose edges are colored with $k$  colors, and let $\alpha,\beta$ be two vertices of $G$. There exists a graph $F$ obtained by splitting $\alpha$ and $\beta$ into $\alpha_1,\dots,\alpha_p$, and  $\beta_1,\dots,\beta_q$, respectively, such that the following conditions hold.
\begin{enumerate}
     \item [\textup{(i)}]  $\dg_{F(\ell)}(\alpha_i)\approx \dg_{G(\ell)}(\alpha)/p$  for  $i\in [p],\ell\in [k]$;
     \item [\textup{(ii)}]  $\dg_{F(\ell)}(\beta_j)\approx \dg_{G(\ell)}(\beta)/q$ for  $j\in [q],\ell\in [k]$;
     \item [\textup{(iii)}]  $\mult_F(\alpha_i u)\approx \mult_G(\alpha u)/p$ for  $i\in[p],u\in V(G)\backslash \{\alpha,\beta\}$;
     \item [\textup{(iv)}]  $\mult_F(\beta_j u)\approx \mult_G(\beta u)/q$ for  $j\in[q],u\in V(G)\backslash \{\alpha,\beta\}$;
     \item [\textup{(v)}]  $\mult_F(\alpha_i \beta_j)\approx \mult_G(\alpha\beta)/(pq)$ for  $i\in[p],j\in[q]$.
\end{enumerate}

\end{lemma}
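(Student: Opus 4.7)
The plan is to recast the completion of $L$ as an edge-coloring problem on an amalgamated bipartite multigraph $G$ with parts $[r]\cup\{\alpha\}$ and $[s]\cup\{\beta\}$, where $\alpha$ stands in for the missing rows $r+1,\dots,n$ and $\beta$ for the missing columns $s+1,\dots,n$. The edge multiset of $G$ consists of the $rs$ precolored edges encoding $L$, together with $n-s$ edges from each $i\in[r]$ to $\beta$, $n-r$ edges from $\alpha$ to each $j\in[s]$, and $(n-r)(n-s)$ parallel edges between $\alpha$ and $\beta$. A completion of $L$ corresponds exactly to an extension of the coloring of $L$ to an edge-coloring of $G$ in which color class $\ell$ has $\rho_\ell$ edges and $\dg_{G(\ell)}(i),\dg_{G(\ell)}(j)\le 1$ for $i\in[r]$, $j\in[s]$, $\dg_{G(\ell)}(\alpha)\le n-r$, and $\dg_{G(\ell)}(\beta)\le n-s$. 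Given such a coloring, Lemma~\ref{amalgambahrod} with $p=n-r$ and $q=n-s$ detaches $\alpha,\beta$ into the full row/column complement, so that the resulting graph $F$ is the desired $n\times n$ $\brho$-latin square extending $L$.

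\textbf{Necessity.} Given a completion, let $e_\ell^{TR}$, $e_\ell^{BL}$, $e_\ell^{BR}$ denote the occurrences of symbol $\ell$ in the top-right, bottom-left, and bottom-right subrectangles, and set $a_\ell:=e_\ell^{TR}-\max\{0,\rho_\ell-e_\ell-n+r\}$ and $b_\ell:=e_\ell^{BL}-\max\{0,\rho_\ell-e_\ell-n+s\}$. The identities $\sum_\ell e_\ell^{TR}=r(n-s)$ and $\sum_\ell e_\ell^{BL}=s(n-r)$, the at-most-once-per-row/column bounds on $e_\ell^{TR}$ and $e_\ell^{BL}$, and the nonnegativity $e_\ell^{BR}\ge 0$ translate termwise into the three clauses of Definition~\ref{fitdef}. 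The first row-side inequality holds because each of the $|I|(n-s)$ cells in the right extension of rows $I$ gets one symbol, and symbol $\ell$ can occupy at most $\min\{e_\ell^{TR},\mu_I(\ell)\}$ of them---at most $e_\ell^{TR}$ copies are available, and only the $\mu_I(\ell)$ rows in $I$ where $\ell$ is missing from $L$ can accept it; substituting the definition of $a_\ell$ yields the stated form. The remaining five row-side inequalities follow by complementation ($I\leftrightarrow\bar I$, $K\leftrightarrow\bar K$) together with $a\dotdiv b=a-\min\{a,b\}$, and the column side is symmetric.

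\textbf{Sufficiency.} Conversely, given a fitting sequence and one inequality from each block, recover $e_\ell^{TR},e_\ell^{BL}$ by inverting the definitions of $a_\ell,b_\ell$. Each row-side inequality is equivalent to the Gale--Ryser/Hall $f$-factor criterion for the bipartite multigraph on $[r]\cup[k]$ in which $(i,\ell)$ is present iff $\ell$ is missing from row $i$ of $L$, with target degrees $n-s$ on $[r]$ and $e_\ell^{TR}$ on $[k]$; a solution prescribes which symbol to place on each $i$--$\beta$ edge of $G$. Symmetrically color the $\alpha$--$j$ edges using the column-side inequality. Color the $(n-r)(n-s)$ edges between $\alpha$ and $\beta$ arbitrarily, using symbol $\ell$ exactly $e_\ell^{BR}:=\rho_\ell-e_\ell-e_\ell^{TR}-e_\ell^{BL}$ times; nonnegativity of $e_\ell^{BR}$ and the bounds $\dg_{G(\ell)}(\alpha)\le n-r$, $\dg_{G(\ell)}(\beta)\le n-s$ all follow from the $a_\ell+b_\ell$ clause of Definition~\ref{fitdef} and its case analysis on $P_r,P_s$. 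Invoking Lemma~\ref{amalgambahrod} now detaches $\alpha,\beta$ into $\alpha_1,\dots,\alpha_{n-r}$ and $\beta_1,\dots,\beta_{n-s}$; conditions (i)--(v) ensure $\mult_F(\alpha_i u)=\mult_F(\beta_j u)=\mult_F(\alpha_i\beta_j)=1$ and that every color-class degree at $\alpha_i,\beta_j$ is $0$ or $1$---i.e., the completed $\brho$-latin square.

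\textbf{Main obstacle.} The heart of the proof is the bookkeeping that verifies pairwise equivalence of the six inequalities within each block---paralleling \eqref{orenec}$\Leftrightarrow$\eqref{hoffmanc} in Theorem~\ref{rhohallthmgoldetal}---and shows that any one of them reduces cleanly to the Hall-type criterion for the extension step. The mixed $I,K$-indexed forms correspond to Hall's constrained rather than refined condition; reducing them to a canonical single-index form via complementation and identities such as $\mu_I(\ell)+\mu_{\bar I}(\ell)=r-e_\ell$ is where the calculation concentrates.
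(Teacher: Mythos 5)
Your proposal does not address the statement at hand. The statement to be proved is Lemma~\ref{amalgambahrod}, the detachment lemma itself: given a $k$-edge-colored graph $G$ and two vertices $\alpha,\beta$, one must construct a graph $F$ by splitting $\alpha$ into $p$ copies and $\beta$ into $q$ copies so that each color-class degree, each multiplicity to an outside vertex, and the $\alpha\beta$-multiplicity are shared as evenly as possible (conditions (i)--(v)). What you have written instead is an outline of the proof of Theorem~\ref{rhoryserthmfullversion}, in which Lemma~\ref{amalgambahrod} is \emph{invoked} as a black box (``Invoking Lemma~\ref{amalgambahrod} now detaches $\alpha,\beta$\dots''). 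Nowhere do you argue why such a balanced splitting exists; you assume it. That is the entire content of the lemma, so the proposal contains a complete gap with respect to the statement.

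For the record, the paper does not prove this lemma either --- it cites Bahmanian and Rodger \cite{MR2946077}, where it is established by an amalgamation/detachment argument: one splits off the new vertices one at a time and uses equitable (balanced) edge-coloring techniques, essentially Euler-tour arguments on suitable auxiliary multigraphs, to guarantee that at every stage the degrees within each color class and the multiplicities to each other vertex differ from their target averages by less than one. If your assignment was to prove the lemma, you would need to supply an argument of that kind (or an explicit induction on $p+q$ with a careful invariant); the material you wrote belongs to the proof of the main theorem in Section~\ref{rhorythmprfsec}, not here.
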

For a hypergraph analogue, we refer the reader to \cite{MR2942724}. To give the reader an idea about the usefulness of this detachment lemma, let us show how to construct $\brho$-latin squares.   Theorem \ref{rholatconstthm} can be viewed as an immediate consequence of Theorem \ref{rhohallthmgoldetal} (See  \cite[Theorem 5.1]{MR3280683}). 
\begin{theorem} \label{rholatconstthm} For every  $n,k, \brho:=(\rho_1,\dots,\rho_k)$ with $1\leq \rho_1,\dots,\rho_k\leq n\leq k$ and $\sum_{\ell\in [k]} \rho_\ell=n^2$,  there exists a  $\brho$-latin square of order $n$.
\end{theorem}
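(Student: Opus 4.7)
The plan is to realize a $\brho$-latin square of order $n$ as a proper edge-coloring of $K_{n,n}$ in which color $\ell$ appears exactly $\rho_\ell$ times, and to produce this coloring by starting from a fully amalgamated two-vertex model and detaching via Lemma \ref{amalgambahrod}. This is the natural application of the detachment lemma in the bipartite setting, and it does not require invoking Theorem \ref{rhohallthmgoldetal}.

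First I would construct the bipartite multigraph $G[\{\alpha\},\{\beta\}]$ that has $n^2$ parallel edges between $\alpha$ and $\beta$, and color these edges with $k$ colors so that color class $G(\ell)$ contains exactly $\rho_\ell$ edges; this is possible because $\sum_{\ell\in [k]} \rho_\ell=n^2$. Thus $\dg_G(\alpha)=\dg_G(\beta)=n^2$, $\mult_G(\alpha\beta)=n^2$, and $\dg_{G(\ell)}(\alpha)=\dg_{G(\ell)}(\beta)=\rho_\ell$ for each $\ell\in [k]$.

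Next I would apply Lemma \ref{amalgambahrod} with $p=q=n$ to obtain a detachment $F$ of $G$ in which $\alpha$ is split into $\alpha_1,\dots,\alpha_n$ and $\beta$ into $\beta_1,\dots,\beta_n$. Conditions (i) and (ii) give $\dg_{F(\ell)}(\alpha_i)\approx \rho_\ell/n$ and $\dg_{F(\ell)}(\beta_j)\approx \rho_\ell/n$ for every $i,j,\ell$; since $0\leq \rho_\ell\leq n$, these degrees must lie in $\{0,1\}$, so each color class of $F$ is a matching. Condition (v) gives $\mult_F(\alpha_i\beta_j)\approx n^2/(n\cdot n)=1$, forcing $\mult_F(\alpha_i\beta_j)=1$ for every $i,j$, so $F$ equals $K_{n,n}$. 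Conditions (iii) and (iv) are vacuous since $V(G)\setminus\{\alpha,\beta\}=\emptyset$.

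Finally, reading the coloring of $F$ as a filling of the $n\times n$ array whose $(i,j)$-cell receives the color of the edge $\alpha_i\beta_j$ produces a $\brho$-latin square: each row $\alpha_i$ and each column $\beta_j$ sees each color at most once (the matching condition), and the total number of cells colored $\ell$ equals the number of edges in $F(\ell)$, which equals $|E(G(\ell))|=\rho_\ell$ since detachment preserves the number of edges in each color class. The only step that requires any attention is checking that the $\approx$ rounding bounds force integer values in the right range, and this is immediate from $1\leq \rho_\ell\leq n$; there is no genuine obstacle, which is precisely the point of advertising this theorem as a quick illustration of the detachment machinery.
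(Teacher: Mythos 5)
Your proof is correct and takes exactly the same route as the paper: build the two-vertex amalgamation $G$ with $\mult_{G(\ell)}(\alpha\beta)=\rho_\ell$, apply the detachment lemma with $p=q=n$, and read the resulting $K_{n,n}$ as a $\brho$-latin square. You have merely spelled out the routine $\approx$-rounding checks that the paper leaves implicit.
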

\begin{proof}
Let  $G[\{\alpha\}, \{\beta\}]$ be a $k$-edge-colored bigraph with $\mult_G(\alpha \beta)=n^2$, $\mult_{G(\ell)}(\alpha \beta)=\rho_\ell$ for $\ell\in [k]$. 
Applying the detachment lemma with $p=q=n$ yields the complete bigraph $F\cong K_{n,n}$ whose colored edges corresponds to symbols in the desired $\brho$-latin square of order $n$. 
\end{proof}
 Let $f,g$ be  integer functions on the vertex set of a graph $G$ such that $0\leq g(x)\leq f(x)$ for all $x$. A {\it $(g,f)$-factor} is a spanning subgraph $F$ of $G$ with the property that $g(x)\leq \dg_F(x)\leq f(x)$ for each $x$, and an {\it $f$-factor} is  an $(f,f)$-factor.  We need the following result which is known as  Ore's Theorem. For far reaching generalizations of Ore's Theorem, we refer the reader to  Lov\'{a}sz's seminal paper \cite{MR325464}. 
\begin{theorem}\cite{MR83725} \label{OresThm}
The bipgraph $G[X,Y]$ has an $f$-factor  if and only if $f(X)=f(Y)$ and either one of the following conditions hold.
\begin{align*}
    f(A)&\leq \sum_{u\in Y} \min\Big\{f(u), \mult_G(uA)\Big\} &\forall A\subseteq X,\\
    f(A) &\leq f(B) + \mult_G(A\bar B) &\forall A\subseteq X, B\subseteq Y.
\end{align*}
\end{theorem}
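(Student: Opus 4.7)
The plan is to recast completion as an amalgamation-detachment problem and reduce the construction of the amalgamated graph to a bipartite $f$-factor problem solvable by Theorem~\ref{OresThm}. To any $n\times n$ $\brho$-latin square extending $L$, one associates a $k$-edge-colored bipartite multigraph $G[\{1,\dots,r,\alpha\},\{1',\dots,s',\beta\}]$ obtained by identifying the last $n-r$ rows with a single vertex $\alpha$ and the last $n-s$ columns with a single vertex $\beta$. The rectangle $L$ contributes single colored edges on $[r]\times[s']$; the multi-edges $i\beta$, $\alpha j'$, and $\alpha\beta$, of multiplicities $n-s$, $n-r$, and $(n-r)(n-s)$ respectively, encode the top-right, bottom-left, and bottom-right blocks. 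Applied with $p=n-r$ and $q=n-s$, Lemma~\ref{amalgambahrod} splits $\alpha$ and $\beta$ back into individual rows and columns; the approximation clauses force every $\mult_F(i\beta_j), \mult_F(\alpha_i u), \mult_F(\alpha_i\beta_j)$ to equal $1$, so the resulting $F\cong K_{n,n}$ reads off as the desired $\brho$-latin square, provided that $\dg_{G(\ell)}(\alpha)\leq n-r$ and $\dg_{G(\ell)}(\beta)\leq n-s$ for every color $\ell$.

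For sufficiency, given a fitting sequence $\{(a_\ell,b_\ell)\}$, I set $a'_\ell:=a_\ell+(\rho_\ell-e_\ell)-(n-r)$ if $\ell\in P_r$ and $a'_\ell:=a_\ell$ otherwise, and define $b'_\ell$ analogously; the values $a'_\ell,b'_\ell$ play the role of the prescribed counts of symbol $\ell$ in the top-right and bottom-left blocks. A direct computation converts the fitting identities into $\sum_\ell a'_\ell=r(n-s)$ and $\sum_\ell b'_\ell=s(n-r)$; the defining feature of $P_r$ yields $a'_\ell\geq\max\{0,\rho_\ell-e_\ell-(n-r)\}$, and hence $\dg_{G(\ell)}(\alpha)=\rho_\ell-e_\ell-a'_\ell\leq n-r$, and symmetrically at $\beta$; finally the cap on $a_\ell+b_\ell$ is exactly $c'_\ell:=\rho_\ell-e_\ell-a'_\ell-b'_\ell\geq 0$. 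Building the $[r]$-$\beta$ block of $G$ then reduces to finding an $f$-factor in the bipartite graph on $[r]\cup[k]$ with edge $(i,\ell)$ iff $\ell$ is missing in row $i$ of $L$, $f(i)=n-s$ and $f(\ell)=a'_\ell$: Theorem~\ref{OresThm} applied in its two equivalent forms, together with complementations in $I$ and $K$ and the identity $\mu_I(\ell)+\mu_{\bar I}(\ell)=r-e_\ell$, translates, after the substitution $a'_\ell\leftrightarrow a_\ell$, into exactly the six displayed row inequalities. The same argument on the $\alpha$-$[s']$ block yields the column inequalities, while the $\alpha\beta$ block is filled freely with color-$\ell$ multiplicity $c'_\ell\geq 0$ (these sum to $(n-r)(n-s)$). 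Lemma~\ref{amalgambahrod} then delivers the desired $K_{n,n}$.

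For necessity, any extension supplies canonical non-negative values $a'_\ell,b'_\ell,c'_\ell$; reversing the substitution produces a fitting sequence, and since the $[r]$-$\beta$ and $\alpha$-$[s']$ bipartite subgraphs of the amalgamation of the extension are honest $f$-factors, each of the twelve inequalities is a consequence of Theorem~\ref{OresThm}. The main technical obstacle is bookkeeping: matching each of the six row (respectively column) inequalities to a standard form of Ore's condition after distributing $\min$ and $\dotdiv$ over $P_r$ and $\bar P_r$, and in particular handling the asymmetric case $\ell\in P_{\max\{r,s\}}\setminus P_{\min\{r,s\}}$, where only one of the two blocks forces a nontrivial lower bound on $a'_\ell$ or $b'_\ell$.
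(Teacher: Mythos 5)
Your proposal does not address the statement it was meant to prove. The statement is Ore's $f$-factor criterion for bipartite graphs: $G[X,Y]$ has an $f$-factor if and only if $f(X)=f(Y)$ and one of the two displayed inequalities holds (for all $A\subseteq X$, respectively for all $A\subseteq X$ and $B\subseteq Y$). What you have written is instead an outline of the proof of the paper's main result, Theorem \ref{rhoryserthmfullversion}, the Ryser-type embedding theorem for $\brho$-latin rectangles: you build the amalgamated bigraph with the vertices $\alpha$ and $\beta$, introduce the fitting sequence $\{(a_\ell,b_\ell)\}$ and the sets $P_r$, $P_s$, and then explicitly \emph{invoke} Theorem \ref{OresThm} to produce the required $f$-factors in the auxiliary bipartite graphs $\Gamma_1$ and $\Gamma_2$. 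Using the theorem you are supposed to prove as a black box cannot constitute a proof of it, however carefully the surrounding bookkeeping is done.

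A genuine proof of Theorem \ref{OresThm} would have to establish the equivalence between the existence of an $f$-factor in a bipartite multigraph and the stated inequalities --- for example by a max-flow/min-cut or defect-form Hall argument on $G[X,Y]$, or by specializing the $(g,f)$-factor theory of Lov\'asz \cite{MR325464} to the bipartite $g=f$ case --- together with a verification that the two displayed conditions are mutually equivalent (in the spirit of Remark \ref{ordenewcond}, which the paper uses to pass between the ``$\min$'' form and the ``$\dotdiv$'' form). None of this appears in your write-up. Note also that the paper itself offers no proof of this statement: it is quoted from Ore \cite{MR83725} as a known tool. So there is nothing in your text that can be matched against a proof of Theorem \ref{OresThm}; what you have produced is a (reasonable, if schematic) sketch of the argument for Theorem \ref{rhoryserthmfullversion}.
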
 
\begin{remark} \label{ordenewcond} \textup{
Let us fix $A\subseteq X$. For $u\in Y$, we have
\begin{align*}
     \kappa (u)&:=\min\Big\{f(u), \mult_G(uA)\Big\}+ \Big (f(u)\dotdiv \mult_G(u  A)\Big)\\
     &=
     \begin{cases}
        \mult_G(uA)+ \Big (f(u)- \mult_G(u  A)\Big) & \mbox{if  } f(u)\geq  \mult_G(uA) \\
        f(u)+0 & \mbox{if  } f(u)<  \mult_G(uA)
     \end{cases}\\
     &=f(u).
\end{align*}
So, $\sum_{u\in Y} \kappa(u)=f(Y)$, and the first condition in Theorem \ref{OresThm} is equivalent to
\begin{align*}
        f(A)&\leq f(Y)-\sum_{u\in Y} \Big (f(u)\dotdiv \mult_G(u  A)\Big),
\end{align*}
which is equivalent to 
\begin{align*}
             f(\bar A)\geq     \sum_{u\in Y} \Big (f(u)\dotdiv \mult_G(u  A)\Big).
\end{align*}
}\end{remark}

\section{{\normalfont Hall's Theorem for }\protect\boldmath$\rho${\normalfont-latin Rectangles} }
Hall \cite{MR13111} showed that any $r\times n$ latin rectangle can be extended to an $n\times n$ latin square. Here we give a   short proof of the main result of \cite{MR3280683} which generalizes Hall's theorem to $\brho$-latin rectangles.

\begin{theorem} \label{rhohallthm}
An $r\times n$  $\brho$-latin rectangle $L$ can be completed to an $n\times n$ $\brho$-latin square  if and only if $\rho_\ell - e_\ell\leq n-r$ for  $\ell\in [k]$, and any of the following conditions hold.
\begin{align*}
    |J|(n-r) &\leq \sum_{\ell\in [k]} \min \Big\{\rho_\ell-e_\ell, \mu_J(\ell)\Big\}  &\forall J\subseteq  [n],\\
    \sum_{\ell\in K} (\rho_\ell-e_\ell) &\leq \sum_{j\in [n]} \min \Big\{n-r, \mu_K(j)\Big\}  &\forall K\subseteq  [k],\\
  |J|(n-r) &\geq \sum_{\ell\in [k]} \Big( \rho_\ell-e_\ell\dotdiv \mu_{\bar J}(\ell)\Big)  &\forall J\subseteq [n],\\    
     \sum_{\ell\in K}(\rho_\ell-e_\ell)    &\geq \sum_{j\in [n]} \Big (n-r\dotdiv \mu_{\bar K}(j)\Big)  &\forall K\subseteq [k],\\
    |J|(n-r) &\leq \sum_{\ell\in K} (\rho_\ell-e_\ell) + \mu_J(\bar K)  &\forall J\subseteq [n], K\subseteq [k], \\
    \sum_{\ell\in K} (\rho_\ell-e_\ell) &\leq |J|(n-r) + \mu_K(\bar J)  &\forall J\subseteq  [n], K\subseteq [k].
\end{align*}
\end{theorem}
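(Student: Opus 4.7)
The plan is to reduce the completion of the $r\times n$ rectangle to two standard ingredients: an $f$-factor in a bipartite ``missing'' graph (via Ore's Theorem) to select the multiset of symbols appearing in each column below row $r$, and a single-vertex detachment (via Lemma \ref{amalgambahrod}) to distribute those symbols among the $n-r$ new rows. Necessity is immediate by direct counting in any extension: fixing $J\subseteq[n]$, the $|J|(n-r)$ cells of the last $n-r$ rows lying in columns $J$ must all be filled, and symbol $\ell$ can occupy at most $\mu_J(\ell)$ of them (only columns where $\ell$ was missing) and at most $\rho_\ell-e_\ell$ in total, giving the first condition; the dual count with $K\subseteq[k]$ fixed gives the second, and the remaining four are equivalent reformulations obtained via Remark \ref{ordenewcond}. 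The bound $\rho_\ell-e_\ell\le n-r$ is also forced, since at most $n-r$ further copies of $\ell$ fit in the new rows.

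For sufficiency, I would construct the bipartite ``missing'' graph $H[[k],[n]]$ with one edge $\ell j$ whenever symbol $\ell$ is absent from column $j$ of $L$, and define $f(\ell)=\rho_\ell-e_\ell$ for $\ell\in[k]$ and $f(j)=n-r$ for $j\in[n]$. Using $\sum_\ell\rho_\ell=n^2$ and $\sum_\ell e_\ell=rn$ yields $f([k])=f([n])=n(n-r)$. Since $\mult_H(jK)=\mu_K(j)$ and $\mult_H(\ell J)=\mu_J(\ell)$, the first and second displayed inequalities of the theorem become precisely Ore's first condition applied on the two sides of $H$, while the remaining four are their complementary (dual) forms via Remark \ref{ordenewcond}. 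Hence Theorem \ref{OresThm} supplies an $f$-factor $M$ of $H$.

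Now I would form the $k$-edge-coloured bigraph $G[[r]\cup\{\alpha\},[n]]$ whose edges from $[r]$ encode the cells of $L$ (each coloured by its entry), and whose edges from $\alpha$ are the edges of $M$, with each edge $\ell j$ coloured $\ell$. Then $\mult_G(\alpha j)=n-r$, $\dg_{G(\ell)}(\alpha)=\rho_\ell-e_\ell$, and $\mult_{G(\ell)}(\alpha j)\le 1$, the last being $1$ exactly when $\ell$ is missing from column $j$ in $L$. Applying Lemma \ref{amalgambahrod} with $p=n-r$ (splitting $\alpha$ into $\alpha_{r+1},\dots,\alpha_n$) and a trivial $q=1$ at any other vertex: item (iii) forces $\mult_F(\alpha_i j)=1$, so each new row meets each column in exactly one edge; item (i), combined with the hypothesis $\rho_\ell-e_\ell\le n-r$, forces $\dg_{F(\ell)}(\alpha_i)\in\{0,1\}$, so no symbol repeats within a new row; and the colours inherited from $M\subseteq H$ ensure each new row $\alpha_i$ avoids whatever symbols already occupy its columns in $L$. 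The resulting $k$-edge-coloured $K_{n,n}$ encodes the desired $n\times n$ $\brho$-latin square completing $L$.

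The only delicate step is the bookkeeping that matches all six displayed hypotheses to the two condition families of Ore's Theorem; this is a symmetric, side-by-side invocation of Remark \ref{ordenewcond} together with the identity $\mu_K(J)=\mu_J(K)$, carried out on both sides of $H$ and in both complemented and uncomplemented forms.
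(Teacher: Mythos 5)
Your proposal is correct and follows essentially the same route as the paper: build the bipartite "missing" graph on columns versus symbols (the paper's $\Gamma[Y,[k]]$, your $H$), apply Ore's Theorem together with Remark~\ref{ordenewcond} to obtain the $f$-factor with $f(j)=n-r$ and $f(\ell)=\rho_\ell-e_\ell$, and then invoke Lemma~\ref{amalgambahrod} to split the amalgamated vertex $\alpha$ into the $n-r$ new rows, using $\rho_\ell-e_\ell\le n-r$ and $\mult_G(\alpha y_j)=n-r$ to get simple degrees after detachment. The only cosmetic difference is that you establish necessity by a direct counting argument while the paper obtains it as part of the chain of equivalences, but the key constructions and both lemmas are identical.
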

 \begin{proof} Recall that the set of symbols used is $[k]$. The  necessity of 
\begin{align} \label{ryscond1}
    \rho_\ell - e_\ell&\leq n-r\quad \forall \ell\in [k]
\end{align}
is immediate from ~\eqref{mainryserineq}.  The necessity of the remaining conditions will be evident at the end of the proof. For now, let us assume that \eqref{ryscond1} is satisfied.  Let $F[\tilde X, Y]$ be the complete bigraph $K_{n,n}$ where $\tilde X:=\{x_1,\dots,x_n\}, Y:=\{y_1,\dots, y_n\}$, and let $X=\{x_1,\dots, x_r\}$. The edge $x_i y_{j}$ of $F$ is colored $\ell$ if $L_{ij}=\ell$ for $i\in[r],j\in[n]$. We have $\dg_{F(\ell)}(u)\leq 1$ for $u\in X\cup Y,$    $e_\ell=|E(F(\ell))|\leq \rho_\ell$ for $\ell\in [k]$, and $\sum_{\ell\in [k]}e_\ell=rn$.  
Let  $G$ be the bigraph obtained from $F$ by amalgamating $x_{r+1},\dots, x_n$ into a single vertex $\alpha$, so $\mult_G(\alpha y_j)=n-r$ for $j\in [n]$. Let  $\Gamma [Y, [k]]$ be the simple bigraph whose edge set is $$\{u \ell\ \big|\ u\in Y, \ell\in [k],  \dg_{F(\ell)}(u)=0\}.$$ 
For    $j\in [n]$,  $\sum_{\ell\in [k]}\dg_{F(\ell)}(y_j)=r$, and for $\ell\in [k]$, there are $e_\ell$ edges in $F(\ell)$. Therefore, 
\begin{align*}
\begin{cases}
\dg_\Gamma(y_j)=k-r & \mbox{if  } j\in [n], \\
\dg_{\Gamma}(\ell)=n-e_\ell & \mbox{if  } \ell\in [k].
\end{cases}
\end{align*}
Observe that $L$ can be completed if and only if the uncolored edges of $F$ can be colored so that
\begin{align}\label{colorcon1s}
\forall \ell\in [k], \quad \quad \begin{cases}
\dg_{F(\ell)}(u)\leq 1 & \mbox{if  } u\in \tilde X\cup Y, \\
|E(F(\ell))|=\rho_\ell.
\end{cases}
\end{align}
We show that the coloring of  $F$ can be completed such that \eqref{colorcon1s} holds if and only if the coloring of $G$ can be completed so that 
\begin{align}\label{colorcon2s}
\forall \ell\in [k],\quad \quad  \begin{cases}
\dg_{G(\ell)}(u) \leq 1 & \mbox{if  } u \in X \cup Y, \\
\dg_{G(\ell)}(\alpha) \leq n-r, \\
|E(G(\ell))|= \rho_\ell.
\end{cases}
\end{align}
To see this, first assume that the coloring of  $F$ can be completed such that \eqref{colorcon1s} holds. Identifying all the vertices in $\tilde X\backslash X$ of $F$ by $\alpha$ we will get the graph $G$ satisfying \eqref{colorcon2s}. Conversely, suppose that  we have a  coloring of $G$
such that \eqref{colorcon2s} holds. Applying   Lemma \ref{amalgambahrod} to $G$, we get a graph $F'$ obtained by splitting $\alpha$  into $\alpha_1,\dots,\alpha_{n-r}$ such that the following hold.
\begin{enumerate}
    \item  [(i)] $\dg_{F'(\ell)}(\alpha_i)\approx\dg_{G(\ell)}(\alpha)/(n-r)\leq 1$  for  $i\in [n-r],\ell\in [k]$;
    \item  [(ii)] $\mult_{F'}(\alpha_i u)= \mult_G(\alpha u)/(n-r)=1$ for  $i\in[n-r],u\in Y$.
\end{enumerate}
Since $F'\cong F$ and the coloring of $F'$ satisfies \eqref{colorcon1s}, we are done. 

Now, we show that the coloring of  $G$ can be completed such that  \eqref{colorcon2s} is satisfied if and only if there exists  a subgraph $\Theta$ of $\Gamma$  with $n(k-r)$ edges so that 
\begin{align}\label{colorcon3s}
\begin{cases}
\dg_\Theta (y_j)=n-r  & \mbox{if  }  j\in [n], \\
\dg_\Theta (\ell)=\rho_\ell-e_\ell  & \mbox{if  } \ell \in [k].
\end{cases}
\end{align}
To prove this, suppose that the coloring of  $G$ can be completed such that \eqref{colorcon2s} holds. Let  $\Theta [ Y, [k]]$ be the bigraph whose edge set is 
$$\{u\ell\ \big|\ u\in  Y, \ell\in [k], \mbox{ and  }\alpha u\in E(G(\ell))   \}.$$ 
Observe that $\Theta \subseteq \Gamma$. Moreover, $\dg_\Theta(y_j)=\mult_G(\alpha y_j)=n-r$ for $j\in [n]$, and $\dg_{\Theta}(\ell)= |E(G(\ell))|-e_\ell=\rho_\ell-e_\ell$ for $\ell\in [k]$, and so \eqref{colorcon3s} holds.  Conversely, suppose that   $\Theta\subseteq \Gamma$ satisfying \eqref{colorcon3s} exists. For each $\ell\in [k]$,   if $\ell y_j \in E(\Theta)$ for some $j\in [n]$, we color an $\alpha y_j$-edge in $G$ with $\ell$. Since $\dg_\Theta(y_j)=n-r$ for $j\in [n]$, all the edges between $\alpha$ and $Y$ can be colored this way. Since $\Theta$ is simple, $d_{G(\ell)}(u)\leq 1$ for $\ell\in [k]$ and $u\in  Y$.  It is also clear that $|E(G(\ell))|= \rho_\ell-e_\ell+e_\ell=\rho_\ell$, and by ~\eqref{ryscond1} 
$\dg_{G(\ell)}(\alpha)=\dg_\Theta(\ell)=\rho_\ell-e_\ell \leq n-r$ for $\ell\in [k]$.

Let 
\begin{align*}
    \begin{cases}
        f: V(\Gamma)\rightarrow \mathbb{N}\cup \{0\},\\
        f(y_j)=n-r & \mbox{for  } j\in [n],\\
        f(\ell)=\rho_\ell-e_\ell & \mbox{for  }\ell\in [k].      
    \end{cases}
\end{align*}
 Since $f(Y)= f([k])$, by Ore's Theorem and Remark \ref{ordenewcond}, $\Gamma$ has an $f$-factor (and so $\Theta\subseteq \Gamma$ satisfying \eqref{colorcon3s} exists)  if and only if any of the following  conditions holds. 
\begin{align*}
    f(J)&\leq \sum_{\ell\in [k]} \min\Big\{f(\ell), \mult_\Gamma(\ell J)\Big\} &\forall J\subseteq Y,\\
    f(K)&\leq \sum_{u\in Y} \min\Big\{f(u), \mult_\Gamma(uK)\Big\} &\forall K\subseteq [k],\\
    f(J)&\geq \sum_{\ell\in [k]} \Big (f(\ell)\dotdiv \mult_\Gamma(\ell \bar J)\Big)  &\forall J\subseteq Y,\\
    f(K)&\geq \sum_{u\in Y} \Big (f(u)\dotdiv \mult_\Gamma(u \bar K)\Big)  &\forall K\subseteq [k],\\
     f(J) &\leq f(K) + \mult_\Gamma(J\bar K) &\forall K\subseteq [k], J\subseteq Y,\\  f(K) &\leq f(J) + \mult_\Gamma(K\bar J) &\forall K\subseteq [k], J\subseteq Y.
\end{align*}
But these conditions are equivalent to  those of Theorem \ref{rhohallthm}. 
\end{proof}

\section{{\normalfont Ryser's Theorem for }\protect\boldmath$\rho${\normalfont-latin Rectangles} } \label{rhorythmprfsec}
Seventy years ago, Ryser \cite{MR42361} showed that any $r\times s$  latin rectangle can be extended to an $n\times n$ latin square if and only if  $e_\ell\geq r+s-n$ for $\ell\in [n]$. We provide a generalization of this result to $\brho$-latin rectangles by proving the following Theorem \ref{rhoryserthmfullversion}. For a simpler generalization of Ryser's Theorem, see Corollary \ref{cor2}.

\noindent \textit{Proof of Theorem \ref{rhoryserthmfullversion}.}
Recall that the set of symbols used is $[k]$.  The existence of a fitting sequence $\{(a_\ell,b_\ell)\}_{\ell=1}^k$  implies ~\eqref{mainryserineq}. To see this, observe that by Definition \ref{fitdef},
$$
0\leq a_\ell+b_\ell\leq 2n- r-s+ e_\ell-\rho_\ell \quad \mbox {for } \ell\in P_{\min\{r,s\}}. 
$$
Moreover, using the definition of $P_{\min\{r,s\}}$, we have
$$\rho_\ell-e_\ell \leq n- \min\{r,s\}\leq 2n-r-s \quad \mbox{for } \ell\in \bar P_{\min\{r,s\}}.$$ 
The necessity of all  conditions will be evident at the end of the proof. For now, let us assume that ~\eqref{mainryserineq} --- whose necessity was established in the introduction --- holds. 
Let $F[\tilde X,\tilde Y]$ be the complete bigraph $K_{n,n}$ where $\tilde X:=\{x_1,\dots,x_n\}, \tilde Y:=\{y_1,\dots, y_n\}$, and let $X=\{x_1,\dots, x_r\},Y=\{y_1,\dots,y_s\}$.  The edge $x_i y_j$ of $F$ is colored $\ell$ if $L_{ij}=\ell$ for $i\in[r],j\in[s]$. We have $\dg_{F(\ell)}(u)\leq 1$ for $u\in X\cup Y, \ell\in [k]$,  $e_\ell=|E(F(\ell))|\leq \rho_\ell$ for $\ell\in [k]$, and $\sum_{\ell\in [k]}e_\ell=rs$ (See Figure \ref{fig1rhoryser} (b)).  
Let  $G$ be the bigraph obtained from $F$ by amalgamating $x_{r+1},\dots, x_n$ into a single vertex $\alpha$, and amalgamating $x_{s+1},\dots, y_n$ into a single vertex $\beta$, so $\mult_G(\alpha y_j)=n-r$ for $j\in [s]$, $\mult_G(\beta x_i)=n-s$ for $ i\in [r]$, and $\mult_G(\alpha \beta)=(n-r)(n-s)$ (See Figure \ref{fig1rhoryser} (c)).  Let  $\Gamma [X\cup Y, [k]]$ be the simple bigraph whose edge set is 
$$\{u\ell\ \big |\ u\in X\cup Y, \ell\in [k],  \dg_{F(\ell)}(u)=0\}.$$ 
For  $i\in [r]$, since $\sum_{\ell\in [k]}\dg_{F(\ell)}(x_i)=s$, we have $\dg_\Gamma(x_i)=k-s$. Similarly, for  $j\in [s]$,  $\sum_{\ell\in [k]}
\dg_{F(\ell)}(y_j)=r$, and so we have $\dg_\Gamma(y_j)=k-r$. For $\ell\in [k]$, there are $e_\ell$ edges in $F(\ell)$, and so  $\mult_{\Gamma}(\ell X)=r-e_\ell$ and $\mult_{\Gamma}(\ell Y)=s-e_\ell$ (Recall that by the latin property of $L$, $e_\ell \leq \min \{r,s\}$). Therefore, $\dg_\Gamma(\ell)=r+s-2e_\ell$ for $\ell\in [k]$. In short, $\Gamma$ meets the following properties (See Figure \ref{fig1rhoryser} (d)).
\begin{align*}
\begin{cases}
\dg_\Gamma(x_i)=k-s & \mbox{if  }  i\in [r], \\
\dg_\Gamma(y_j)=k-r & \mbox{if  }  j\in [s], \\
\mult_{\Gamma}(\ell X)=r-e_\ell  & \mbox{if  } \ell \in [k],\\
\mult_{\Gamma}(\ell Y)=s-e_\ell  & \mbox{if  } \ell \in [k],\\
\dg_\Gamma(\ell)=r+s-2e_\ell  & \mbox{if  } \ell \in [k].
\end{cases}
\end{align*}
\begin{figure}[p] 
  \begin{adjustbox}{addcode={\begin{minipage}{\width}}{\caption{
      A $\rho$-latin rectangle and three associated auxiliary bigraphs
      }\end{minipage}},rotate=90,center}  
 \includegraphics[scale=1.2]{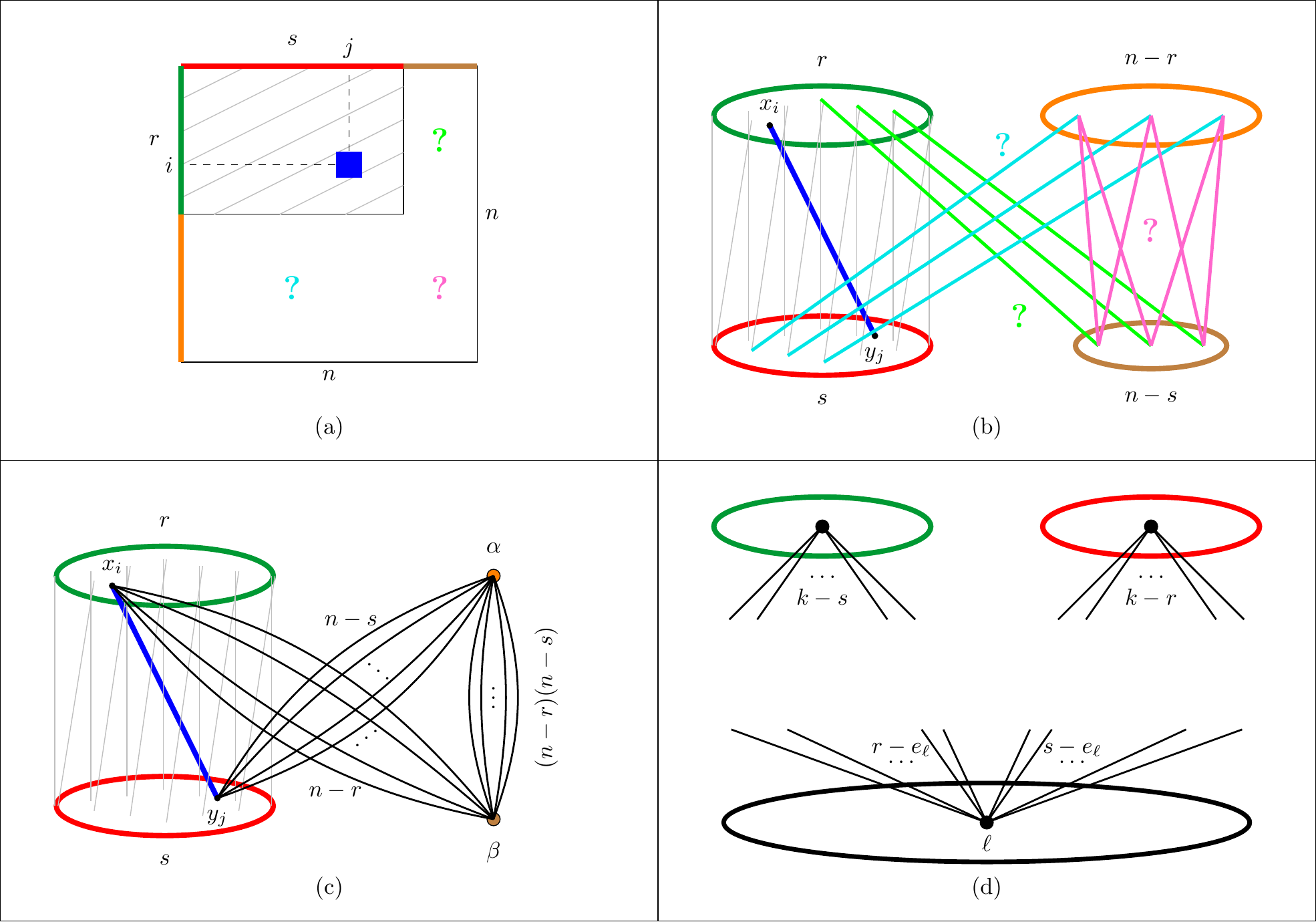} \end{adjustbox}
 \label{fig1rhoryser} 
\end{figure}

Observe that $L$ can be completed if and only if the uncolored edges of  $F$ can be colored so that
\begin{align}\label{colorcon1}
\forall \ell\in [k],\quad \quad \begin{cases}
\dg_{F(\ell)}(u)\leq 1 & \mbox{if  }  u\in \tilde X \cup \tilde Y, \\
|E(F(\ell))|=\rho_\ell.
\end{cases}
\end{align}

We show that the coloring of  $F$ can be completed such that  \eqref{colorcon1} is satisfied if and only if the coloring of $G$ can be completed such that,
\begin{align}\label{colorcon2}
\forall \ell\in [k],\quad \quad  \begin{cases}
\dg_{G(\ell)}(u) \leq 1 & \mbox{if  } u \in X \cup Y, \\
\dg_{G(\ell)}(\alpha) \leq n-r, \\
\dg_{G(\ell)}(\beta) \leq n-s, \\
|E(G(\ell))|= \rho_\ell.
\end{cases}
\end{align}
To see this, first assume that the coloring of  $F$ can be completed so that  \eqref{colorcon1} holds. Identifying all the  vertices in $\tilde X\backslash X$   by $\alpha$ and all the vertices in $\tilde Y\backslash Y$   by $\beta$, we will get the graph $G$ satisfying \eqref{colorcon2}. Conversely, suppose that  we have a  coloring of $G$
such that \eqref{colorcon2} holds. Applying the detachment lemma to $G$, we get a graph $F'$ obtained by splitting $\alpha$ and $\beta$ into $\alpha_1,\dots,\alpha_{n-r}$, and  $\beta_1\dots,\beta_{n-s}$, respectively, such that the following hold.
\begin{enumerate}
\item [(i)] $\dg_{F'(\ell)}(\alpha_i)\approx\dg_{G(\ell)}(\alpha)/(n-r)\leq 1$  for  $i\in [n-r],\ell\in [k]$;
\item [(ii)] $\dg_{F'(\ell)}(\beta_j)\approx \dg_{G(\ell)}(\beta)/(n-s)\leq 1$ for  $j\in [n-s],\ell\in [k]$;
\item [(iii)] $\mult_{F'}(\alpha_i u)= \mult_G(\alpha u)/(n-r)=1$ for  $i\in[n-r],u\in Y$;
\item [(iv)] $\mult_{F'}(\beta_j u)= \mult_G(\beta u)/(n-s)=1$ for  $j\in[n-s],u\in X$;
\item [(v)] $\mult_{F'}(\alpha_i \beta_j)= \mult_G(\alpha\beta)/\big((n-r)(n-s)\big)=1$ for  $i\in[n-r],j\in[n-s]$.
\end{enumerate}
 Since $F'\cong F$ and the coloring of $F'$ satisfies \eqref{colorcon1}, we are done. 

For $\ell\in [k]$, since $\rho_\ell\leq n$, we have that $\rho_\ell-e_\ell-n+r\leq r-e_\ell$ and $\rho_\ell-e_\ell-n+s\leq s-e_\ell$. We show that the coloring of  $G$ can be completed such that  \eqref{colorcon2} is satisfied if and only if there exists  a subgraph $\Theta$ of $\Gamma$  with $n(r+s)-2rs$ edges so that 
\begin{align}\label{colorcon3}
\begin{cases}
\dg_\Theta (x_i)=n-s & \mbox{if  }  i\in [r], \\
\dg_\Theta (y_j)=n-r  & \mbox{if  }  j\in [s], \\
\dg_\Theta (\ell)\leq \rho_\ell-e_\ell  & \mbox{if  } \ell \in [k],\\
\mult_\Theta (\ell X)\geq \rho_\ell-e_\ell-n+r  & \mbox{if  } \ell \in [k],\\
\mult_\Theta (\ell Y)\geq \rho_\ell-e_\ell-n+s  & \mbox{if  } \ell \in [k].
\end{cases}
\end{align}

To prove this, suppose that the coloring $G$ can completed such that  \eqref{colorcon2} is satisfied. Let  $\Theta [X\cup Y, [k]]$ be the bigraph whose edge set is 
$$\{u \ell\ \big |\ u\in X\cup Y, \ell\in [k], \mbox{ and either }\alpha u\in E(G(\ell)) \mbox{ or } \beta u \in E(G(\ell))  \}.$$ 
It is clear that  $\Theta \subseteq \Gamma$.   
Moreover, 
 $\dg_\Theta(x_i)=\mult_G(\beta x_i)=n-s$ for $i\in [r]$, $\dg_\Theta(y_j)=\mult_G(\alpha y_j)=n-r$ for $j\in [s]$, and $\dg_{\Theta}(\ell)\leq |E(G(\ell))|-e_\ell=\rho_\ell-e_\ell$ for $\ell\in [k]$. Notice that 
 $$\rho_\ell=|E(G(\ell))|=e_\ell+\mult_{G(\ell)}(\alpha \beta) +\mult_{G(\ell)} (\beta X) + \mult_{G(\ell)} (\alpha Y)\quad  \mbox {for }\ell\in [k].$$
Thus,  for $\ell\in [k]$ we have 
 \begin{align*}
\mult_\Theta (\ell X) &=\mult_{G(\ell)} (\beta X)   \\
& = \rho_\ell-e_\ell-\mult_{G(\ell)}(\alpha \beta) - \mult_{G(\ell)} (\alpha Y)\\
 &=  \rho_\ell-e_\ell-\dg_{G(\ell)}(\alpha) \\
& \geq   \rho_\ell-e_\ell-n+r,\\
\mult_\Theta (\ell Y)& =\mult_{G(\ell)} (\alpha Y)   \\
 &=  \rho_\ell-e_\ell-\mult_{G(\ell)}(\alpha \beta) - \mult_{G(\ell)} (\beta X)\\
& = \rho_\ell-e_\ell-\dg_{G(\ell)}(\beta) \\
& \geq   \rho_\ell-e_\ell-n+s,
\end{align*}
and so \eqref{colorcon3} holds. Observe  that $\rho_\ell-e_\ell  \geq \dg_\Theta (\ell)= \mult_\Theta (\ell X)+ \mult_\Theta (\ell Y)\geq  \rho_\ell-e_\ell-n+r +  \rho_\ell-e_\ell-n+s$; this is  consistent with   ~\eqref{mainryserineq}.

Conversely, suppose that   $\Theta\subseteq \Gamma$ satisfying \eqref{colorcon3} exists. For each $\ell\in [k]$, if $\ell x_i \in E(\Theta)$ for some $i\in [r]$, we color a $\beta x_i$-edge in $G$ with $\ell$, and  if $\ell y_j \in E(\Theta)$ for some $j\in [s]$, we color an $\alpha y_j$-edge in $G$ with $\ell$. Since $\dg_\Theta(x_i)=n-s$ for $i\in [r]$, and $\dg_\Theta(y_j)=n-r$ for $j\in [s]$, all the edges between $\beta$ and $X$, and all the edges between $\alpha$ and $Y$ can be colored this way. Since $\Theta$ is simple, $d_{G(\ell)}(u)\leq 1$ for $\ell\in [k]$ and $u\in X\cup Y$.  Then, we color the $\alpha \beta$-edges so that
$$\mult_{G(\ell)}(\alpha \beta) = \rho_\ell-e_\ell-\dg_\Theta (\ell)\quad  \mbox {for }\ell\in [k].$$
This can be done, because $ \rho_\ell-e_\ell-\dg_\Theta (\ell)\geq 0$ for $\ell\in [k]$, and 
\begin{align*}
   \sum_{\ell\in [k]}\mult_{G(\ell)}(\alpha \beta)&=n^2-rs-n(r+s)+2rs\\
   &=(n-r)(n-s)\\&=\mult_G(\alpha \beta).
\end{align*}
For $\ell\in [k]$, we have
  \begin{align*}
\dg_{G(\ell)} (\alpha) &=\mult_{G(\ell)} (\alpha Y)+ \mult_{G(\ell)} (\alpha \beta)  \\
 &=  \mult_\Theta (\ell Y)+\rho_\ell-e_\ell-\dg_\Theta (\ell)\\
 &=  \rho_\ell-e_\ell-\mult_\Theta (\ell X)\\
& \leq  n-r,\\
\dg_{G(\ell)} (\beta) &=\mult_{G(\ell)} (\beta X)+ \mult_{G(\ell)} (\alpha \beta)  \\
& =  \mult_\Theta (\ell X)+\rho_\ell-e_\ell-\dg_\Theta (\ell)\\
& =  \rho_\ell-e_\ell-\mult_\Theta (\ell Y)\\
& \leq  n-s,
\end{align*}
and so \eqref{colorcon2} is satisfied.

Let $\Gamma_1, \Gamma_2$ be the subgraphs of $\Gamma$ induced by $X\cup [k]$, and  $Y\cup [k]$, respectively. The existence of $\Theta\subseteq \Gamma$ satisfying \eqref{colorcon3} is equivalent to the existence of $\Theta_1\subseteq \Gamma_1, \Theta_2\subseteq \Gamma_2$ satisfying the following condition.
\begin{align*}
\begin{cases}
\dg_{\Theta_1} (x_i)=n-s & \mbox{if  }  i\in [r], \\
\dg_{\Theta_1} (\ell)\geq \rho_\ell-e_\ell-n+r  & \mbox{if  } \ell \in [k],\\
\dg_{\Theta_2} (y_j)=n-r  & \mbox{if  }  j\in [s], \\
\dg_{\Theta_2} (\ell)\geq \rho_\ell-e_\ell-n+s  & \mbox{if  } \ell \in [k],\\
   \dg_{\Theta_1} (\ell)+\dg_{\Theta_2} (\ell)\leq \rho_\ell-e_\ell   &  \mbox{if  } \ell \in [k].
\end{cases}
\end{align*}
The three inequalities of  this  condition are equivalent to the existence of a sequence $\{(a_\ell,b_\ell)\}_{\ell=1}^k$ with $a_\ell,b_\ell\in \mathbb{N} \cup \{0\}$ for $\ell\in [k]$ such that the following three properties are satisfied (Recall that $P_{\min \{r,s\}}\subseteq P_{\max \{r,s\}}$).

\begin{align*}
\dg_{\Theta_1} (\ell)&=\begin{cases}
a_\ell+\rho_\ell-e_\ell-n+r  & \mbox{if  } \ell \in P_r,\\
a_\ell  & \mbox{if  } \ell \in \bar P_r,
\end{cases}\\
\dg_{\Theta_2} (\ell)&=\begin{cases}
b_\ell+\rho_\ell-e_\ell-n+s  & \mbox{if  } \ell \in P_s,\\
b_\ell  & \mbox{if  } \ell \in \bar P_s,
\end{cases}\\
\rho_\ell-e_\ell&\geq
\begin{cases}
a_\ell+b_\ell+2\rho_\ell-2e_\ell-2n+r+s  & \mbox{if  } \ell \in P_{\min \{r,s\}},\\
a_\ell+b_\ell+\rho_\ell-e_\ell-n+\max \{r,s\}  & \mbox{if  } \ell \in P_{\max \{r,s\}}\backslash P_{\min \{r,s\}},\\
a_\ell+b_\ell  & \mbox{if  } \ell \in \bar P_{\max \{r,s\}},
\end{cases}
\end{align*}
where the last property simplifies to the following.
\begin{align} \label{abseqineqpro}
        a_\ell+b_\ell\leq \begin{cases}
            2n- r-s+ e_\ell-\rho_\ell& \mbox {if  } \ell \in P_{\min \{r,s\}},\\
            n- \max \{r,s\} & \mbox {if  } \ell\in \ell \in P_{\max \{r,s\}}\backslash P_{\min \{r,s\}},\\
            \rho_\ell -e_\ell & \mbox {if  } \ell \in \bar P_{\max \{r,s\}}.
                \end{cases}
\end{align} 
We define $f_1,f_2$ as follows. 
\begin{align*}
    \begin{cases}
        f_1: V(\Gamma_1)\rightarrow \mathbb{N}\cup \{0\},\\
        f_1(x_i)=n-s & \mbox{if  } i\in [r],\\
        f_1(\ell)=a_\ell+\rho_\ell-e_\ell-n+r & \mbox{if  }\ell\in P_r, \\      
        f_1(\ell)=a_\ell & \mbox{if  }\ell\in \bar P_r,       
    \end{cases}
\end{align*}
\begin{align*}
    \begin{cases}
        f_2:  V(\Gamma_2)\rightarrow \mathbb{N}\cup \{0\},\\
        f_2(y_j)=n-r & \mbox{for  } i\in [s],\\
        f_2(\ell)=b_\ell+\rho_\ell-e_\ell-n+s & \mbox{for  }\ell\in P_s,  \\ 
        f_2(\ell)=b_\ell & \mbox{for  }\ell\in \bar P_s.  
    \end{cases}
\end{align*}

By the preceding paragraph,  $\Theta\subseteq \Gamma$ satisfying \eqref{colorcon3} exists if and only if there exists a sequence $\{(a_\ell,b_\ell)\}_{\ell=1}^k$  satisfying \eqref{abseqineqpro} such that  $\Gamma_1$ and $\Gamma_2$ have an $f_1$-factor and $f_2$-factor, respectively. 
Observe that $f_1(X)=f_1([k])$ if and only if
\begin{align*}
    r(n-s)=\sum_{\ell\in P_r} (a_\ell+\rho_\ell-e_\ell-n+r) + \sum_{\ell\in \bar P_r} a_\ell,
\end{align*}
or equivalently,  
\begin{align} \label{abseqsum1}
     \sum_{\ell\in [k]} a_\ell=r(n-s)+|P_r|(n-r)-\sum_{\ell\in P_r} (\rho_\ell-e_\ell).
\end{align}
Similarly, $f_2(Y)=f_2([k])$ if and only if
\begin{align*}
    s(n-r)=\sum_{\ell\in P_s} (b_\ell+\rho_\ell-e_\ell-n+s) + \sum_{\ell\in \bar P_s} b_\ell,
\end{align*}
or equivalently,   
\begin{align}\label{abseqsum2}
     \sum_{\ell\in [k]} b_\ell=s(n-r)+|P_s|(n-s)-\sum_{\ell\in P_s} (\rho_\ell-e_\ell).
\end{align}

By Ore's Theorem, $\Gamma_1$ has an $f_1$-factor  if and only if \eqref{abseqsum1} together with any  of the following conditions hold.
\begin{align*}
    f_1(I)&\leq \sum_{\ell\in [k]} \min\Big\{f_1(\ell), \mult_{\Gamma_1}(\ell I)\Big\} &\forall I\subseteq X,\\
    f_1(K)&\leq \sum_{u\in X} \min\Big\{f_1(u), \mult_{\Gamma_1}(uK)\Big\} &\forall K\subseteq [k],\\
    f_1(I)&\geq \sum_{\ell\in [k]} \Big (f_1(\ell)\dotdiv \mult_{\Gamma_1}(\ell \bar I)\Big)  &\forall I\subseteq X,\\
    f_1(K)&\geq \sum_{u\in X} \Big (f_1(u)\dotdiv \mult_{\Gamma_1}(u \bar K)\Big)  &\forall K\subseteq [k],\\
     f_1(I) &\leq f_1(K) + \mult_{\Gamma_1}(I\bar K) &\forall K\subseteq [k], I\subseteq X,\\
    f_1(K) &\leq f_1(I) + \mult_{\Gamma_1}(K\bar I) &\forall K\subseteq [k], I\subseteq X,
\end{align*} 
Similarly,  $\Gamma_2$ has an $f_2$-factor  if and only if \eqref{abseqsum2}  and any  of the following conditions hold.
\begin{align*}
    f_2(J)&\leq \sum_{\ell\in [k]} \min\Big\{f_2(\ell), \mult_{\Gamma_2}(\ell J)\Big\} &\forall J\subseteq Y,\\
    f_2(K)&\leq \sum_{u\in Y} \min\Big\{f_2(u), \mult_{\Gamma_2}(uK)\Big\} &\forall K\subseteq [k],\\
    f_2(J)&\geq \sum_{\ell\in [k]} \Big (f_2(\ell)\dotdiv \mult_{\Gamma_2}(\ell \bar J)\Big)  &\forall J\subseteq Y,\\
    f_2(K)&\geq \sum_{u\in Y} \Big (f_2(u)\dotdiv \mult_{\Gamma_2}(u \bar K)\Big)  &\forall K\subseteq [k],\\
     f_2(J) &\leq f_2(K) + \mult_{\Gamma_2}(J\bar K) &\forall K\subseteq [k], J\subseteq Y,\\
    f_2(K) &\leq f_2(J) + \mult_{\Gamma_2}(K\bar J) &\forall K\subseteq [k], J\subseteq Y.
\end{align*} 
These conditions are equivalent to those of Theorem \ref{rhoryserthmfullversion}.
\qed
\begin{remark} \textup{
\begin{itemize}
    \item [(a)] Let us verify that Theorem \ref{rhoryserthmfullversion} implies Theorem \ref{rhohallthm}.  In Theorem \ref{rhoryserthmfullversion}, let $s=n$. Recall that the existence of the fitting sequence $\{(a_\ell,b_\ell)\}_{\ell=1}^k$ implies  that $\rho_\ell-e_\ell \leq n-r$ for $\ell\in [k]$. Therefore,  $P_r=\emptyset, P_n=\{\ell\in [k] \ |\ \rho_\ell>e_\ell\}$. We have
    \begin{align*}
    \begin{cases}
       \displaystyle\sum_{\ell\in [k]} a_\ell=0,\\ \\
        \displaystyle\sum_{\ell\in [k]} b_\ell=n(n-r)-\sum_{\ell\in P_n}(\rho_\ell-e_\ell)=n(n-r)-\sum_{\ell\in [k]}(\rho_\ell-e_\ell)=0,\\ 
         a_\ell+b_\ell\leq 
        \begin{cases}
            n- r+ e_\ell-\rho_\ell& \mbox {if  } \ell \in \emptyset,\\
            0 & \mbox {if  } \ell \in P_{n},\\
            \rho_\ell -e_\ell=0 & \mbox {if  } \ell \in \bar P_{n}.
        \end{cases}
    \end{cases}
    \end{align*}     
    Hence, there is a unique fitting sequence, namely, $a_\ell=b_\ell=0$ for $\ell\in [k]$.
Consequently, the first six conditions of Theorem \ref{rhoryserthmfullversion} will be trivial, and the remaining six conditions will be identical to those of Theorem \ref{rhohallthm}. 
\item [(b)] Now, we show that Ryser's Theorem is  immediate from Theorem \ref{rhoryserthmfullversion}. In Theorem \ref{rhoryserthmfullversion},  if we let $k=n$ (so $L$ is a latin rectangle), then  the existence of $\Theta\subseteq \Gamma$ satisfying \eqref{colorcon3} is trivial. In fact, in this case, $\Theta=\Gamma$. 
\end{itemize}
}\end{remark}
\begin{remark} \textup{
We can restate Theorem \ref{rhoryserthmfullversion} without introducing the sets $P_r$ and $P_s$. To do so, let us  rewrite $f_1(\ell)=a_\ell+(\rho_\ell-e_\ell+r\dotdiv n)$, and  $f_2(\ell)=b_\ell+(\rho_\ell-e_\ell+s\dotdiv n)$. The main condition in the definition of fitting sequence can be written in the following manner.
\begin{align*}
\begin{cases}
       \displaystyle\sum_{\ell\in [k]} a_\ell=r(n-s)-\sum_{\ell\in [k]}(\rho_\ell-e_\ell+r\dotdiv n),\\[20pt] 
        \displaystyle\sum_{\ell\in [k]} b_\ell=s(n-r)-\sum_{\ell\in [k]}(\rho_\ell-e_\ell+s\dotdiv n),\\[20pt]  
         a_\ell+b_\ell\leq \rho_\ell-e_\ell-(\rho_\ell-e_\ell+r\dotdiv n)-(\rho_\ell-e_\ell+s\dotdiv n).
    \end{cases}
\end{align*}
The first six conditions of Theorem \ref{rhoryserthmfullversion} can be restated as follows.
}\end{remark}
\begin{align*}
    |I|(n-s) &\leq \sum_{\ell\in [k]} \min \Big\{a_\ell+(\rho_\ell-e_\ell+r\dotdiv n), \mu_I(\ell)\Big\}  &\forall I\subseteq  [r],\\
\sum_{i\in [r]} \min \Big\{n-s, \mu_K(i)\Big\}  &\geq     \sum_{\ell\in K} \Big(a_\ell+(\rho_\ell-e_\ell+r\dotdiv n)\Big)&\forall K\subseteq  [k],\\
  |I|(n-s) &\geq \sum_{\ell\in [k]} \Big( a_\ell+(\rho_\ell-e_\ell+r \dotdiv n)\dotdiv  \mu_{\bar I}(\ell)\Big) &\forall I\subseteq [r],\\ 
\sum_{i\in [r]} \Big (n-s\dotdiv \mu_{\bar K}(i)\Big)  &\leq  \sum_{\ell\in K}\Big(a_\ell+(\rho_\ell-e_\ell+r \dotdiv n)\Big)     &\forall K\subseteq [k],\\
    |I|(n-s) &\leq \sum_{\ell\in K}\Big(a_\ell+(\rho_\ell-e_\ell+r \dotdiv n)\Big)  + \mu_I(\bar K)  &\forall I\subseteq [r], K\subseteq [k], \\
  |I|(n-s)  &\geq   \sum_{\ell\in K}\Big(a_\ell+(\rho_\ell-e_\ell+r \dotdiv n)\Big) - \mu_K(\bar I)   &\forall I\subseteq  [r], K\subseteq [k].
\end{align*}
Substituting $r$ by $s$, $s$ by $r$, $I$ by $J$, and $a_\ell$ by $b_\ell$ in these conditions, one can obtain the remaining six conditions. 
\section{Corollaries} 
Throughout this section, we will use the same notation as in 
Section \ref{rhorythmprfsec}.  Recall that in order to embed an $r\times s$  $\brho$-latin rectangle into an $n\times n$ $\brho$-latin square, it is necessary that
$$
e_\ell\geq r+s+\rho_\ell-2n \quad \forall \ell\in [k].
$$
We show that imposing slightly stronger assumptions will lead to much simpler conditions than those of Theorem \ref{rhoryserthmfullversion}. Most importantly, the sequence $\{(a_\ell,b_\ell)\}_{\ell=1}^k$ will not be needed if we allow each $e_\ell$ to be slightly bigger  than what  is necessary.

We will  make use of the following $(g,f)$-factor theorem. Here, $N_G(A)$ in the neighborhood of $A$ in $G$. 
\begin{theorem} \cite[Theorem 5]{MR3564794}, \cite[Theorem 1]{MR1081839} \label{gffacthmcomb}
The bipgraph $G[X,Y]$ has a  $(g,f)$-factor   if and only if either one of the following two conditions hold.
\begin{align*}
    g(A)&\leq \sum_{u\in N_G(A)} \min \Big\{ f(u), \mult_G(uA)\Big\}
 &\forall A\subseteq X, A\subseteq Y,\\
f(A)&\geq  \sum_{u\notin A} \Big( g(u) \dotdiv \dg_{G-A}(u) \Big)  &\forall A\subseteq X\cup Y.
\end{align*}
\end{theorem}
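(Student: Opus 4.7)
The idea is to reduce Theorem~\ref{gffacthmcomb} to Ore's Theorem (Theorem~\ref{OresThm}) via the standard slack-vertex construction. Form a bigraph $G^*[X\cup\{x^*\},Y\cup\{y^*\}]$ from $G[X,Y]$ by adjoining two new vertices $x^*$ on the $X$-side and $y^*$ on the $Y$-side, joining $x^*$ to each $y\in Y$ by $f(y)-g(y)$ parallel edges, $y^*$ to each $x\in X$ by $f(x)-g(x)$ parallel edges, and adding $M$ edges between $x^*$ and $y^*$ for a suitably large $M$. Extend $f$ to $f^*$ on $G^*$ by choosing $f^*(x^*)$ and $f^*(y^*)$ so that $f^*(X\cup\{x^*\})=f^*(Y\cup\{y^*\})$ and so that $f^*$-factors of $G^*$ are in bijection with $(g,f)$-factors of $G$, the bijection being restriction to $G$ in one direction and completion at the slack vertices in the other.

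\textbf{Necessity.} This is by direct double-counting. If $F\subseteq G$ is a $(g,f)$-factor and $A\subseteq X$, then
\[
g(A)\leq \dg_F(A)=\sum_{u\in N_F(A)}\mult_F(uA)\leq \sum_{u\in N_G(A)}\min\{f(u),\mult_G(uA)\},
\]
and the case $A\subseteq Y$ is symmetric. For $A\subseteq X\cup Y$, each $u\notin A$ satisfies $\dg_F(u)\geq g(u)$ and $\dg_{F\cap(G-A)}(u)\leq \dg_{G-A}(u)$, hence $u$ sends at least $g(u)\dotdiv \dg_{G-A}(u)$ of its $F$-edges into $A$; summing over $u\notin A$ and using $\dg_F(A)\leq f(A)$ gives the second condition.

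\textbf{Sufficiency and main obstacle.} Apply Ore's Theorem to $G^*$, using both equivalent forms: the Hall-type condition on one-sided subsets $A^*$, and the defect condition on pairs $(A^*,B^*)$. The plan is to case-split each condition according to whether the slack vertices $x^*,y^*$ lie in the subsets under consideration, substitute the explicit values of $f^*(x^*),f^*(y^*)$ and the slack multiplicities $f(y)-g(y)$, $f(x)-g(x)$, $M$, and simplify; in the cases where the slack vertices are excluded from the critical sets, the inequalities reduce (after invoking Remark~\ref{ordenewcond}) precisely to the two conditions stated in Theorem~\ref{gffacthmcomb}. The main obstacle is the bookkeeping in the remaining cases: one must verify that every inequality produced by a subset containing $x^*$ or $y^*$ is redundant given the stated conditions, which pins down the correct choice of $f^*(x^*), f^*(y^*)$ and forces $M$ to be large enough that the $x^*y^*$-edges never bind while also not weakening the constraints at the slack vertices.
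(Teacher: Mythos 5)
This theorem is imported from the literature with citations to Cymer--Kano and to Heinrich--Hell--Kirkpatrick--Liu; the paper does not supply its own proof, so there is no internal argument to compare yours against. I will therefore assess the proposal on its own terms.

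Your necessity argument is correct and complete. For $A\subseteq X$, the chain $g(A)\leq \dg_F(A)=\sum_{u\in N_F(A)}\mult_F(uA)\leq \sum_{u\in N_G(A)}\min\{f(u),\mult_G(uA)\}$ is sound (the first equality uses that $A$ is one-sided so every $F$-edge at $A$ lands outside $A$), and the argument for the defect condition is likewise correct: for $u\notin A$ one has $\mult_F(uA)=\dg_F(u)-\dg_{F-A}(u)\geq g(u)\dotdiv \dg_{G-A}(u)$, and $\sum_{u\notin A}\mult_F(uA)\leq \dg_F(A)\leq f(A)$ because edges with both ends in $A$ only inflate the right side.

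The sufficiency direction, however, is a plan rather than a proof. The slack-vertex reduction to Ore's Theorem is a reasonable and classical device, and the choices you would be led to (roughly $f^*(x^*)=f(Y)$, $f^*(y^*)=f(X)$, $M$ at least $\min\{f(X),f(Y)\}$) do produce a graph $G^*$ whose $f^*$-factors restrict to $(g,f)$-factors of $G$ and conversely. But the entire content of the sufficiency proof is exactly the ``bookkeeping'' you explicitly defer: you must show that Ore's condition on $G^*$ for every $A^*\subseteq X^*$ (respectively every pair $(A^*,B^*)$) is implied by the stated condition of Theorem~\ref{gffacthmcomb}, and the nontrivial cases are precisely those where $x^*$ or $y^*$ belongs to the critical set, because there the slack multiplicities $f(u)-g(u)$ and the $M$ edges enter the inequality in a way that does not simply cancel. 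Moreover, the theorem asserts two separate equivalences --- the Hall-type condition over one-sided $A$ and the defect condition over arbitrary $A\subseteq X\cup Y$ --- and the second has a genuinely different index set than Ore's pair condition, so translating it is not a routine substitution. As it stands, you have identified the right machinery and a correct target, but the proof has not been carried out; the step marked as the main obstacle is in fact the proof.
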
 
Slight modification to the proof of \cite[Theorem 1]{MR1081839} leads to the following  simpler criteria for the case when  $g(y)=0$ for $y\in Y$: The  bigraph $G[X,Y]$ has a  $(g,f)$-factor  with $g(y)=0$ for $y\in Y$  if and only if
\begin{equation} \label{refinedgfthmmain}
    f(B)\geq  \sum_{x\in A} \Big( g(x) \dotdiv \dg_{G-B}(x) \Big) \quad \forall A\subseteq X, B\subseteq Y.
\end{equation}

In our first application of Theorem \ref{rhoryserthmfullversion}, we assume that $e_\ell\geq \rho_\ell-n+\max\{r,s\}$ for  $\ell\in [k]$. This, in particular implies that
$$
rs=\sum_{\ell\in [k]}e_\ell\geq \sum_{\ell\in [k]} \big( \rho_\ell-n+\max\{r,s\}\big)=n^2-kn + k \max\{r,s\},
$$
which leads to the following lower bound for the number of symbols.
$$k\geq \frac{n^2-rs}{n-\max\{r,s\}}\geq n+\max\{r,s\}.$$

\begin{corollary} \label{corkbig}
An $r\times s$  $\brho$-latin rectangle with $e_\ell\geq \rho_\ell-n+\max\{r,s\}$ for  $\ell\in [k]$ can  be completed to an $n\times n$ $\brho$-latin square if and only if any of the following conditions hold.
\begin{align*}
 |I|(n-s)+ |J|(n-r)&\leq \sum_{\ell\in [k]} \min \Big\{ \rho_\ell-e_\ell, \mu_I(\ell) + \mu_J(\ell)\Big\}
 &\forall I\subseteq [r], J\subseteq [s],\\
\sum_{\ell\in K} (\rho_\ell-e_\ell)&\geq \sum_{i\in  I} \Big( n-s \dotdiv \mu_{\bar K}(i) \Big)+
  \sum_{j\in  J} \big( n-r \dotdiv \mu_{\bar K}(j) \Big) &\forall I\subseteq [r], J\subseteq [s], K\subseteq [k].
\end{align*}
\end{corollary}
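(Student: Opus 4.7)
My plan is to leverage the reduction already constructed in the proof of Theorem~\ref{rhoryserthmfullversion}, specialize it under the stronger hypothesis so that the fitting sequence disappears entirely, and then apply Theorem~\ref{gffacthmcomb} and the refined variant~\eqref{refinedgfthmmain} directly to the auxiliary bigraph $\Gamma$.

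First I would observe that the hypothesis $e_\ell\geq \rho_\ell-n+\max\{r,s\}$ is exactly $\rho_\ell-e_\ell\leq n-\max\{r,s\}\leq \min\{n-r,n-s\}$ for every $\ell\in[k]$, so $P_r=P_s=\emptyset$ by definition. Everything in the proof of Theorem~\ref{rhoryserthmfullversion} up through the equivalence with~\eqref{colorcon3} applies unchanged, so $L$ can be completed iff there is $\Theta\subseteq\Gamma$ satisfying~\eqref{colorcon3}. With $P_r=P_s=\emptyset$, the two multiplicity lower bounds $\mult_\Theta(\ell X)\geq\rho_\ell-e_\ell-n+r$ and $\mult_\Theta(\ell Y)\geq\rho_\ell-e_\ell-n+s$ have non-positive right-hand sides and are automatic, so~\eqref{colorcon3} collapses to $\dg_\Theta(x_i)=n-s$ for $i\in[r]$, $\dg_\Theta(y_j)=n-r$ for $j\in[s]$, and $\dg_\Theta(\ell)\leq \rho_\ell-e_\ell$ for $\ell\in[k]$. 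This is precisely a $(g,f)$-factor problem on the bigraph $\Gamma[X\cup Y,[k]]$ with $g(x_i)=f(x_i)=n-s$, $g(y_j)=f(y_j)=n-r$, $g(\ell)=0$, and $f(\ell)=\rho_\ell-e_\ell$; the balance condition $\sum_\ell f(\ell)\geq r(n-s)+s(n-r)$ reduces to $(n-r)(n-s)\geq 0$, which holds.

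To derive the first inequality of the corollary, I would apply the first criterion of Theorem~\ref{gffacthmcomb} with $A=I\cup J$, $I\subseteq X$, $J\subseteq Y$: the left side is $g(A)=|I|(n-s)+|J|(n-r)$, and by construction of $\Gamma$ one has $\mult_\Gamma(\ell A)=\mu_I(\ell)+\mu_J(\ell)$, while $f(\ell)=\rho_\ell-e_\ell$; extending the sum from $N_\Gamma(A)$ to all of $[k]$ is free since the omitted terms vanish. The subcase $A\subseteq [k]$ of the criterion is vacuous because $g\equiv 0$ there. For the second inequality, since $g(\ell)=0$ for $\ell\in[k]$ the refined criterion~\eqref{refinedgfthmmain} applies: with $A=I\cup J$ and $B=K$, I would use $\dg_{\Gamma-K}(x_i)=\mu_{\bar K}(i)$ and $\dg_{\Gamma-K}(y_j)=\mu_{\bar K}(j)$ to rewrite $\sum_{x\in A}\bigl(g(x)\dotdiv \dg_{\Gamma-B}(x)\bigr)$ as the stated right-hand side, while $f(B)=\sum_{\ell\in K}(\rho_\ell-e_\ell)$ is the stated left-hand side.

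The main obstacle is purely bookkeeping: correctly identifying $\mult_\Gamma(\ell A)$ and $\dg_{\Gamma-K}(x_i)$ with the $\mu$-quantities of the corollary. This identification is immediate from the definition of $\Gamma$, since a vertex $u\in X\cup Y$ is adjacent in $\Gamma$ to exactly those symbols missing at that row or column, and removing the vertices of $K$ from $\Gamma$ leaves precisely the neighbors in $\bar K$.
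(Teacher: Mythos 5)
Your proposal is correct and takes essentially the same route as the paper: specializing \eqref{colorcon3} when the multiplicity lower bounds are nonpositive, phrasing the result as a $(g,f)$-factor problem on $\Gamma$ with $g\equiv 0$ on $[k]$, and then reading off the two stated conditions from Theorem~\ref{gffacthmcomb} and~\eqref{refinedgfthmmain}, respectively. (One cosmetic slip: $n-\max\{r,s\}=\min\{n-r,n-s\}$, not $\leq$, though this does not affect the argument.)
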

\begin{proof}
Suppose that $e_\ell\geq \rho_\ell-n+\max\{r,s\}$ for $\ell\in [k]$. Then \eqref{mainryserineq} holds, and neither $\rho_\ell-e_\ell-n+s$ nor $\rho_\ell-e_\ell-n+r$ is positive for $\ell\in [k]$. Thus,    \eqref{colorcon3} will be simplified to the following.
\begin{align*}
\begin{cases}
\dg_\Theta (x_i)=n-s & \mbox{if  }  i\in [r], \\
\dg_\Theta (y_j)=n-r  & \mbox{if  }  j\in [s], \\
\dg_\Theta (\ell)\leq \rho_\ell-e_\ell  & \mbox{if  } \ell \in [k].
\end{cases}
\end{align*}
 Let 
\begin{align*}
    \begin{cases}
        g,f: V(\Gamma)\rightarrow \mathbb{N}\cup \{0\},\\
        g(x_i)=f(x_i)=n-s & \mbox {for } i\in [r],\\
        g(y_j)=f(y_j)=n-r & \mbox {for } j\in [s],\\
        g(\ell)=0  & \mbox {for } \ell\in [k],\\
        f(\ell)=\rho_\ell-e_\ell & \mbox {for } \ell\in [k].
    \end{cases}
\end{align*} 
By Theorem \ref{gffacthmcomb}, $\Gamma  [X\cup Y, [k]]$ has a $(g,f)$-factor (and so $\Theta\subseteq \Gamma$ satisfying \eqref{colorcon3} exists, and consequently, $L$ can be completed) if and only if the following  conditions hold.
\begin{align*}
 g(I)+ g(J)&\leq \sum_{\ell\in [k]} \min \Big\{ f(\ell), \mult_\Gamma(\ell I)+ \mult_\Gamma(\ell J)\Big\}
 &\forall I\subseteq X, J\subseteq Y,\\
 g(K)&\leq \sum_{u\in X\cup Y} \min \Big\{ f(u), \mult_\Gamma(u K)\Big\}
 &\forall K\subseteq [k].
\end{align*}
The first condition is equivalent to the first condition of this corollary. The second condition is trivial for $g(K)=0$ for $K\subseteq [k]$. By \eqref{refinedgfthmmain}  $\Gamma$ has a  $(g,f)$-factor if and only if
\begin{equation*} 
    f(K)\geq  \sum_{u\in U} \Big( g(u) \dotdiv \dg_{\Gamma-K}(u) \Big) \quad \forall U\subseteq (X\cup Y), K\subseteq [k].
\end{equation*}
This  is equivalent to the second condition of this corollary.
\end{proof}

In our next application, we assume that $e_\ell\geq r+s-\rho _\ell$ for  $\ell\in [k]$, which implies the following.
$$
rs=\sum_{\ell\in [k]}e_\ell\geq \sum_{\ell\in [k]} \big( r+s-\rho _\ell\big)=k(r+s)-n^2,
$$
This leads to the following upper bound for the number of symbols.
$$k\leq \frac{n^2+rs}{r+s}.$$

In the next corollary, we use the fact that $(x\dotdiv y)\dotdiv z=x-y \dotdiv z$.
\begin{corollary} \label{cor2}
An $r\times s$  $\brho$-latin rectangle with $e_\ell\geq r+s-\rho _\ell$ for  $\ell\in [k]$ can  be completed to an $n\times n$ $\brho$-latin square if and only if any of  the following  two conditions,
\begin{align*}
    \sum_{\ell\in K} (\rho_\ell-e_\ell+r\dotdiv n) &\leq \sum_{i\in [r]} \min \Big\{ n-s, \mu_K(i)\Big\}  & \forall K\subseteq [k],\\
|I|(n-s)   &\geq \sum_{\ell\in [k]} \big( \rho_\ell-e_\ell+r- n \dotdiv \mu_{\bar I}(\ell) \big)  & \forall I\subseteq [r],
\end{align*}
together with any of the following two  hold.
\begin{align*}
    \sum_{\ell\in K} (\rho_\ell-e_\ell+s\dotdiv n) &\leq \sum_{j\in [s]} \min \Big\{ n-r, \mu_K(j)\Big\}  & \forall K\subseteq [k],\\
|J|(n-r)   &\geq \sum_{\ell\in  [k]} \big( \rho_\ell-e_\ell+s- n \dotdiv \mu_{\bar J}(\ell) \big) &  \forall J\subseteq [s].
\end{align*}
\end{corollary}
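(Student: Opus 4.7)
The plan is to revisit the subgraph $\Theta\subseteq\Gamma$ characterization \eqref{colorcon3} from the proof of Theorem \ref{rhoryserthmfullversion}, show that the hypothesis $e_\ell\geq r+s-\rho_\ell$ decouples the search for $\Theta$ into two independent $(g,f)$-factor problems on $\Gamma_1$ and $\Gamma_2$, and then apply Theorem \ref{gffacthmcomb} to each.

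Writing $\Theta=\Theta_1\cup\Theta_2$ with $\Theta_i\subseteq\Gamma_i$, the constraints in \eqref{colorcon3} split into two independent sets of lower bounds (on $\dg_{\Theta_1}(\ell)$ and $\dg_{\Theta_2}(\ell)$ respectively) plus the joint upper bound $\dg_{\Theta_1}(\ell)+\dg_{\Theta_2}(\ell)\leq \rho_\ell-e_\ell$. Since the structural inequalities $\dg_{\Theta_1}(\ell)\leq\mult_{\Gamma_1}(\ell X)=r-e_\ell$ and $\dg_{\Theta_2}(\ell)\leq\mult_{\Gamma_2}(\ell Y)=s-e_\ell$ always hold, the joint upper bound becomes automatic precisely when $(r-e_\ell)+(s-e_\ell)\leq\rho_\ell-e_\ell$, i.e.\ under the hypothesis $e_\ell\geq r+s-\rho_\ell$. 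Hence $L$ can be completed if and only if $\Gamma_1$ has a $(g_1,f_1)$-factor with $g_1(x_i)=f_1(x_i)=n-s$, $g_1(\ell)=\rho_\ell-e_\ell+r\dotdiv n$, $f_1(\ell)=r-e_\ell$, together with a symmetric $(g_2,f_2)$-factor of $\Gamma_2$.

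Next I will apply Theorem \ref{gffacthmcomb} to $\Gamma_1$. Its first condition for $A\subseteq X$ collapses to the trivial $|A|(n-s)\leq |A|(k-s)$, while for $A=K\subseteq[k]$ it becomes exactly the first condition of the corollary for $\Gamma_1$. For the second condition of Theorem \ref{gffacthmcomb} applied to $A=I\cup K$ with $I\subseteq X$ and $K\subseteq[k]$, specializing to $K=\emptyset$ and invoking the identity $(x\dotdiv y)\dotdiv z=(x-y)\dotdiv z$ cited in the paper yields exactly the second condition of the corollary for $\Gamma_1$. The $\Gamma_2$ pair is handled symmetrically.

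The step that requires care is verifying that the second condition of Theorem \ref{gffacthmcomb} in full generality (arbitrary $I\subseteq X$ and $K\subseteq[k]$) in fact follows from its $K=\emptyset$ restriction, so that the corollary's simpler inequality suffices. To do this I will subtract the $K=\emptyset$ inequality from the general one and use the identity $(n-s)\dotdiv\mu_{\bar K}(x)=\mu_K(x)\dotdiv(k-n)\leq\mu_K(x)$ together with $\sum_{x\notin I}\mu_K(x)=\sum_{\ell\in K}\mu_{\bar I}(\ell)$ to reduce the task to the pointwise inequality
\[
(r-e_\ell)+\big((\rho_\ell-e_\ell+r-n)\dotdiv\mu_{\bar I}(\ell)\big)\geq \mu_{\bar I}(\ell)\qquad (\ell\in K).
\]
A short case split on the sign of $\rho_\ell-e_\ell+r-n-\mu_{\bar I}(\ell)$, using the bound $\mu_{\bar I}(\ell)\leq r-e_\ell$ and the fact that $\rho_\ell\leq n$ from the definition of $\brho$, settles both cases and completes the argument.
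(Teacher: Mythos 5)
Your proof is correct and follows essentially the same architecture as the paper's: you decouple the search for $\Theta$ satisfying \eqref{colorcon3} into independent $(g_1,f_1)$- and $(g_2,f_2)$-factor problems on $\Gamma_1$ and $\Gamma_2$, use the bound $\mu_{\bar I}(\ell)\leq r-e_\ell$ (i.e.\ the joint cap $\dg_{\Theta_1}(\ell)+\dg_{\Theta_2}(\ell)\leq\rho_\ell-e_\ell$ is automatic under the hypothesis), and apply Theorem \ref{gffacthmcomb}. The one place where you diverge is the derivation of the corollary's second condition: the paper replaces $f_1(\ell)$ (resp.\ $f_2(\ell)$) by an arbitrarily large constant $z$ — legitimate because $f_1(\ell)=r-e_\ell=\dg_{\Gamma_1}(\ell)$ is already the maximum degree in any subgraph — which makes the inequality of Theorem \ref{gffacthmcomb} trivial for every $K\neq\emptyset$ at a stroke; you instead retain $f_1(\ell)=r-e_\ell$ and verify by hand, via the identity $(n-s)\dotdiv\mu_{\bar K}(i)=\mu_K(i)\dotdiv(k-n)$, the transposition $\sum_{i\in\bar I}\mu_K(i)=\sum_{\ell\in K}\mu_{\bar I}(\ell)$, and the two-case pointwise bound $(r-e_\ell)+\big((\rho_\ell-e_\ell+r-n)\dotdiv\mu_{\bar I}(\ell)\big)\geq\mu_{\bar I}(\ell)$, that the $K\neq\emptyset$ cases follow from $K=\emptyset$. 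Both routes are sound; the paper's $z$-trick is shorter, while your reduction is more self-contained and makes explicit exactly why the non-binding upper bound $f_1(\ell)=r-e_\ell$ can be discarded. (One small aside: the appeal to $\rho_\ell\leq n$ in your final case split is not actually needed — the observation $\mu_{\bar I}(\ell)\leq r-e_\ell$ closes both cases — but it does no harm.)
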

\begin{proof} 
Suppose that $e_\ell\geq r+s-\rho _\ell$ for $\ell\in [k]$. Then \eqref{mainryserineq} holds,  $\rho_\ell-e_\ell\geq r+s-2e_\ell$ for $\ell\in [k]$, and so  \eqref{colorcon3} will be simplified to the following.
\begin{align*}
\begin{cases}
\dg_\Theta (x_i)=n-s & \mbox{if  }  i\in [r], \\
\dg_\Theta (y_j)=n-r  & \mbox{if  }  j\in [s], \\
\mult_\Theta (\ell X)\geq \rho_\ell-e_\ell-n+r  & \mbox{if  } \ell \in [k],\\
\mult_\Theta (\ell Y)\geq \rho_\ell-e_\ell-n+s  & \mbox{if  } \ell \in [k].
\end{cases}
\end{align*}
Let $\Gamma_1, \Gamma_2$ be the subgraphs of $\Gamma$ induced by $X\cup [k]$, and  $Y\cup [k]$, respectively. Let  
\begin{align*}
    \begin{cases}
        g_1, f_1,: V(\Gamma_1)\rightarrow \mathbb{N}\cup \{0\},\\
        f_1(x_i)=g_1(x_i)=n-s &\mbox {for } i\in [r],\\
        g_1(\ell)=\rho_\ell-e_\ell+r\dotdiv n &\mbox {for } \ell\in [k],\\
        f_1(\ell)=r-e_\ell &\mbox {for } \ell\in [k],
    \end{cases}
\end{align*}
\begin{align*}
    \begin{cases}
        g_2, f_2,: V(\Gamma_2)\rightarrow \mathbb{N}\cup \{0\},\\
        f_2(y_j)=g_2(y_j)=n-r &\mbox {for } j\in [s],\\
        g_2(\ell)=\rho_\ell-e_\ell+s\dotdiv n &\mbox {for } \ell\in [k],\\
        f_2(\ell)=s-e_\ell &\mbox {for } \ell\in [k].
    \end{cases}
\end{align*}
It is clear that  $\Theta\subseteq \Gamma$ satisfying \eqref{colorcon3} exists if and only if $\Gamma_1$ and $\Gamma_2$ have an $f_1$-factor and $f_2$-factor, respectively. By Theorem  \ref{gffacthmcomb}, $\Gamma_1$ has a $(g_1,f_1)$-factor if and only if
\begin{align*}
    g_1(K)&\leq \sum_{u\in X} \min \Big\{ f_1(u), \mult_{\Gamma_1}(u K)\Big\}
 &\forall K\subseteq [k],\\
    g_1(I)&\leq \sum_{\ell\in [k]} \min \Big\{ f_1(\ell), \mult_{\Gamma_1}(\ell I)\Big\}
 &\forall I\subseteq X,
\end{align*}
and $\Gamma_2$ has a $(g_2,f_2)$-factor if and only if
\begin{align*}
    g_2(K)&\leq \sum_{u\in Y} \min \Big\{ f_2(u), \mult_{\Gamma_2}(u K)\Big\}
 &\forall K\subseteq [k],\\
    g_2(J)&\leq \sum_{\ell\in [k]} \min \Big\{ f_2(\ell), \mult_{\Gamma_2}(\ell J)\Big\}
\quad &\forall J\subseteq Y.
\end{align*}
Equivalently, $\Gamma_1$ has a $(g_1,f_1)$-factor if and only if
\begin{align*}
    \sum_{\ell\in K} (\rho_\ell-e_\ell+r\dotdiv n)&\leq \sum_{i\in [r]} \min \Big\{ n-s, \mu_{K}(i)\Big\}
 &\forall K\subseteq [k],\\
    |I|(n-s)&\leq \sum_{\ell\in [k]} \min \Big\{ r-e_\ell, \mu_I(\ell)\Big\}
 &\forall I\subseteq [r].
\end{align*}
But the second condition is trivial, for $\mu_I(\ell)\leq r-e_\ell$, and so   $\sum_{\ell\in [k]}\mu_I(\ell)=\mu_I([k])=|I|(n-s)$. Similarly, $\Gamma_2$ has a $(g_2,f_2)$-factor if and only if
\begin{align*}
    \sum_{\ell\in K} (\rho_\ell-e_\ell+s\dotdiv n)&\leq \sum_{j\in [s]} \min \Big\{ n-r, \mu_{K}(j)\Big\}
 &\forall K\subseteq [k],\\
    |J|(n-r)&\leq \sum_{\ell\in [k]} \min \Big\{ s-e_\ell, \mu_J(\ell)\Big\}
 &\forall J\subseteq [s].
\end{align*}
 Again, since $\mu_J(\ell)\leq s-e_\ell$, we have  $\sum_{\ell\in [k]}\mu_J(\ell)=\mu_J([k])=|J|(n-r)$, and so the second condition is trivial.

Now we modify $f_1$ and $f_2$ defined above so that for $\ell\in [k]$, $f_1(\ell)=f_2(\ell)=z$ where $z$ is a sufficiently large number. By Theorem \ref{gffacthmcomb}, $\Gamma_1$ has a $(g_1,f_1)$-factor if and only if 
\begin{align} \label{gf1faccond1}
f_1(I)+f_1(K)&\geq    \sum_{\ell\in  \bar K} \Big( g_1(\ell) \dotdiv \dg_{\Gamma_1-(I\cup K)}(\ell) \Big) \nonumber\\
&\quad+ \sum_{i\in  \bar I} \Big( g_1(i) \dotdiv \dg_{\Gamma_1-(I\cup K)}(i) \Big) \quad \forall I\subseteq X, K\subseteq [k].
\end{align}
For $K\neq \emptyset$, \eqref{gf1faccond1} is trivial, and for $K=\emptyset$, it simplifies to the following.
\begin{align*} 
|I|(n-s)&\geq    \sum_{\ell\in   [k]} \Big( (\rho_\ell-e_\ell+r\dotdiv n) \dotdiv \mu_{\bar I}(\ell) \Big)+ \sum_{i\in  \bar I} \Big( n-s  \dotdiv \mu_{[k]}(i) \Big)\\
&=\sum_{\ell\in   [k]} \Big( (\rho_\ell-e_\ell+r\dotdiv n) \dotdiv \mu_{\bar I}(\ell) \Big) \quad \forall I\subseteq [r].
\end{align*}
Similarly,  $\Gamma_2$ has a $(g_2,f_2)$-factor if and only if
\begin{align*} 
|J|(n-r)\geq  \sum_{\ell\in   [k]} \Big( (\rho_\ell-e_\ell+s\dotdiv n) \dotdiv \mu_{\bar J}(\ell) \Big) \quad \forall J\subseteq [s].
\end{align*}
\end{proof}

\begin{corollary}
An $r\times s$  $\brho$-latin rectangle satisfying the following condition can always be completed to an $n\times n$ $\brho$-latin square.  
$$r+s-e_\ell \leq \rho_\ell \leq e_\ell+n-\max\{r,s\}\quad \forall \ell\in [k].$$
\end{corollary}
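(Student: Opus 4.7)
The plan is to derive this corollary directly from Corollary \ref{cor2}. The first half of the hypothesis, $r+s-e_\ell\leq \rho_\ell$, is merely a rewriting of $e_\ell\geq r+s-\rho_\ell$, which is exactly the standing assumption of Corollary \ref{cor2}, so that result applies. It therefore suffices to verify, under the added bound $\rho_\ell\leq e_\ell+n-\max\{r,s\}$, one condition from each of the two ``either/or'' pairs in Corollary \ref{cor2}. I would select the first option in each pair, reducing the task to showing
\begin{align*}
\sum_{\ell\in K}(\rho_\ell-e_\ell+r\dotdiv n) &\leq \sum_{i\in[r]}\min\{n-s,\mu_K(i)\} \qquad\forall K\subseteq[k],\\
\sum_{\ell\in K}(\rho_\ell-e_\ell+s\dotdiv n) &\leq \sum_{j\in[s]}\min\{n-r,\mu_K(j)\} \qquad\forall K\subseteq[k].
\end{align*}

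The key observation is that the added hypothesis rearranges to $\rho_\ell-e_\ell\leq n-\max\{r,s\}$, so for every $\ell\in[k]$
$$\rho_\ell-e_\ell+r\leq n-\max\{r,s\}+r\leq n \qquad\text{and}\qquad \rho_\ell-e_\ell+s\leq n-\max\{r,s\}+s\leq n.$$
Hence $(\rho_\ell-e_\ell+r)\dotdiv n=0$ and $(\rho_\ell-e_\ell+s)\dotdiv n=0$ for every $\ell\in[k]$, and each left-hand side above collapses to $0$. Since each right-hand side is a sum of nonnegative terms, both inequalities hold trivially, and Corollary \ref{cor2} delivers the required completion.

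There is no substantive obstacle in this argument, since the whole proof is a one-line bookkeeping check after invoking Corollary \ref{cor2}. The only point deserving a moment's attention is noting that the single upper bound $\rho_\ell\leq e_\ell+n-\max\{r,s\}$ simultaneously neutralises both the ``$+r$'' and the ``$+s$'' truncations, which is immediate from $\max\{r,s\}\geq r$ and $\max\{r,s\}\geq s$.
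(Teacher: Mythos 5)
Your proof is correct, and it takes a genuinely different route from the paper's. The paper does not invoke Corollary~\ref{cor2} at all; it returns directly to condition~\eqref{colorcon3} in the proof of Theorem~\ref{rhoryserthmfullversion} and observes that the hypotheses $\rho_\ell - e_\ell - n + r \leq 0$, $\rho_\ell - e_\ell - n + s \leq 0$, and $\rho_\ell - e_\ell \geq r+s-2e_\ell$ render all three constraints on the symbol vertices vacuous (the last one because $\dg_\Theta(\ell) \leq \dg_\Gamma(\ell) = r+s-2e_\ell$), leaving only the prescribed degrees $n-s$ at each $x_i$ and $n-r$ at each $y_j$, which can always be met since $\dg_\Gamma(x_i)=k-s\geq n-s$ and $\dg_\Gamma(y_j)=k-r\geq n-r$ and the choices decouple. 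Your argument instead treats Corollary~\ref{cor2} as a black box and notes that the added bound kills the truncated terms $(\rho_\ell-e_\ell+r)\dotdiv n$ and $(\rho_\ell-e_\ell+s)\dotdiv n$, collapsing the relevant inequalities to $0 \leq (\text{nonnegative})$. Both proofs are one-liners; the paper's is more self-contained and makes transparent \emph{why} the factor problem degenerates, while yours has the advantage of cleanly reusing already-stated results without reopening the machinery. One small point worth stating explicitly in your write-up: the standing hypothesis $e_\ell \geq r+s-\rho_\ell$ of Corollary~\ref{cor2} also guarantees the necessary condition~\eqref{mainryserineq}, so no separate check is needed there.
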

\begin{proof}
Condition \eqref{mainryserineq} holds, and  we have $\rho_\ell-e_\ell-n+r\leq 0$, $\rho_\ell-e_\ell-n+s\leq 0$, and $\rho_\ell-e_\ell\geq r+s-2e_\ell$ for $\ell\in [k]$. Hence,  ~\eqref{colorcon3}  will be simplified to the following.
\begin{align*}
\begin{cases}
\dg_\Theta (x_i)=n-s & \mbox{if  }  i\in [r], \\
\dg_\Theta (y_j)=n-r  & \mbox{if  }  j\in [s].
\end{cases}
\end{align*}
Consequently,  $\Theta\subseteq \Gamma$ always  exists. 
\end{proof}

\section*{Acknowledgement}
 We wish to thank J. L.  Goldwasser for a fruitful discussion that led to this publication, and the anonymous referees for their constructive criticism.

\end{document}